\documentclass{amsart}
\oddsidemargin 0mm
\evensidemargin 0mm
\topmargin 0mm
\textwidth 160mm
\textheight 230.7mm
\tolerance=9999
\usepackage{amssymb,amstext,amsmath,amscd,amsthm,amsfonts,enumerate,graphicx,latexsym,stmaryrd,multicol}
\usepackage[usenames]{color}
\usepackage[all]{xy}

\newtheorem{thm}{Theorem}[section]
\newtheorem{lem}[thm]{Lemma}
\newtheorem{prop}[thm]{Proposition}
\newtheorem{cor}[thm]{Corollary}
\theoremstyle{definition}
\newtheorem{dfn}[thm]{Definition}
\newtheorem{ques}[thm]{Question}
\newtheorem{rem}[thm]{Remark}
\newtheorem{conv}[thm]{Convention}

\newtheorem{ex}[thm]{Example}
\theoremstyle{remark}

\newtheorem*{ac}{Acknowlegments}

\numberwithin{equation}{thm}
\def\add{\operatorname{add}}
\def\ann{\operatorname{ann}}
\def\ae{\mathsf{e}^c}
\def\at{\mathsf{t}^c}
\def\C{\mathcal{C}}
\def\cm{\operatorname{CM}}
\def\cos{\mho}
\def\depth{\operatorname{depth}}
\def\E{\mathrm{E}}
\def\e{\operatorname{\mathbb{E}}}
\def\ee{\mathsf{e}}
\def\eee{\operatorname{\mathbf{E}}}

\def\Ext{\operatorname{Ext}}
\def\G{\mathcal{G}}
\def\height{\operatorname{ht}}
\def\H{\mathrm{H}}
\def\Hom{\operatorname{Hom}}
\def\id{\mathrm{id}}
\def\lend{\operatorname{\underline{End}}}
\def\lhom{\operatorname{\underline{Hom}}}

\def\m{\mathfrak{m}}
\def\Max{\operatorname{Max}}
\def\mod{\operatorname{mod}}
\def\nf{\operatorname{NF}}
\def\ng{\operatorname{NonGor}}
\def\p{\mathfrak{p}}

\def\radius{\operatorname{radius}}
\def\rank{\operatorname{rank}}
\def\rhom{\operatorname{\mathbf{R}Hom}}
\def\sing{\operatorname{Sing}}
\def\size{\operatorname{size}}
\def\spec{\operatorname{Spec}}
\def\speco{\operatorname{Spec_0}}
\def\supp{\operatorname{Supp}}
\def\syz{\mathrm{\Omega}}
\def\t{\operatorname{\mathbb{T}}} 
\def \L {\mathbf{L}} 
\def\Tor{\operatorname{Tor}}
\def\tr{\operatorname{Tr}}
\def\trace{\operatorname{tr}}
\def\ttt{\operatorname{\mathbf{T}}}
\def\V{\operatorname{V}}
\def\X{\mathcal{X}}
\def\Y{\mathcal{Y}}
 
\begin{document}
\title{Comparisons between annihilators of Tor and Ext}
\author{Souvik Dey}
\address{Department of Mathematics, University of Kansas, Lawrence, KS 66045-7523, USA}
\email{souvik@ku.edu}
\author{Ryo Takahashi}
\address{Graduate School of Mathematics, Nagoya University, Furocho, Chikusaku, Nagoya 464-8602, Japan}
\email{takahashi@math.nagoya-u.ac.jp}
\urladdr{https://www.math.nagoya-u.ac.jp/~takahashi/}
\thanks{2020 {\em Mathematics Subject Classification.} 13C60, 13D07}
\thanks{{\em Key words and phrases.} annihilator, canonical module, Cohen--Macaulay ring, Ext, maximal Cohen--Macaulay module, cosyzygy, non-Gorenstein locus, punctured spectrum, syzygy, Tor, trace ideal}
\thanks{Takahashi was partly supported by JSPS Grant-in-Aid for Scientific Research 19K03443}
\dedicatory{Dedicated to Professor Nguyen Tu Cuong on the occasion of his seventieth birthday}
\begin{abstract}
In this paper, we compare annihilators of Tor and Ext modules of finitely generated modules over a commutative noetherian ring. 
For local Cohen--Macaulay rings, one of our results refines a theorem of Dao and Takahashi.  
\end{abstract}    
\maketitle
\section{Introduction} 

Let $R$ be a $d$-dimensional Cohen--Macaulay local ring.
Let $0\le c\le d$ be an integer.
In this paper, we are mainly interested in the two ideals
$$
\at_n(R)=\bigcap\ann_R\Tor_i^R(M,N),\qquad
\ae_n(R)=\bigcap\ann_R\Ext_R^i(M,N)
$$
of $R$, where $n \ge 0$ is an integer and the intersections are taken over the integers $i>n$, and the maximal Cohen--Macaulay modules $M,N$ that are locally free in codimension less than $c$.
Namely, motivated by \cite[Theorem 1.1]{dim} which relates $\ee_0^d(R)$ and $\mathsf{t}_0^d(R)$ to the singular locus of $R$ and the dimension of the subcategory of maximal Cohen--Macaulay $R$-modules which are locally free on the punctured spectrum of $R$, we investigate when $\ae_n(R)$ and $\at_n(R)$ coincide (at least up to radical) in general.
Our first main result in this direction is the theorem below, which is included in Theorems \ref{3.12}(3), \ref{3}(2) and Corollaries \ref{13}, \ref{12}(2). 

\begin{thm}\label{14}
The following assertions hold true.
\begin{enumerate}[\rm(1)]
\item
If $R$ is either artinian or Gorenstein, then $\at_n(R)=\ae_n(R)$.
\item
If $R$ is either locally Gorenstein in codimension less than $c$ and admits a canonical module or a complete equicharacteristic local ring with perfect residue field, then $\sqrt{\at_n(R)}=\sqrt{\ae_n(R)}$.  
\end{enumerate}
\end{thm}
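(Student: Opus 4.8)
The goal is to prove the two-part Theorem \ref{14}. I would handle the two parts separately, since they rest on quite different mechanisms.

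For part (1), the plan is to reduce the comparison of $\at_n(R)$ and $\ae_n(R)$ to a module-level statement: for maximal Cohen--Macaulay modules $M, N$ that are free in codimension $<c$, the annihilators $\ann_R \Tor_i^R(M,N)$ and $\ann_R \Ext_R^i(M,N)$ should agree for all $i > n$, possibly after passing to different but comparable pairs of modules so that the intersections coincide. In the Gorenstein case the key tool is duality: if $R$ is Gorenstein of dimension $d$, then for an MCM module $N$ one has $N^\vee = \Hom_R(N, R)$ again MCM, and $\Ext_R^i(M, N) \cong \Ext_R^i(M, (N^\vee)^\vee)$; using that $\rhom_R(N^\vee, R) \simeq N$ and the standard isomorphism relating $\Ext$ over a Gorenstein ring with $\Tor$ against the dual, namely $\Ext_R^i(M, N) \cong \Tor_{?}^R(M, \syz N^\vee)$-type formulas obtained by dimension shifting, one converts each $\Ext$ into a $\Tor$ against an MCM module that is still free in codimension $<c$ (duals and syzygies preserve that locus). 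Running the same argument in reverse converts $\Tor$'s into $\Ext$'s. Since the class of test pairs $\{(M,N)\}$ is closed under these operations, the two intersections over all such pairs coincide, giving $\at_n(R) = \ae_n(R)$. In the artinian case ($d = 0$, so $c = 0$ and "MCM" just means "finitely generated"), I expect either a direct Matlis-duality argument — $\Ext_R^i(M, E) = 0$ characterizations translate $\Ext$-annihilators into $\Tor$-annihilators against the Matlis dual — or the fact that over an artinian ring every module is a direct summand manipulation away from the relevant comparison; this should already be essentially contained in Theorem \ref{3.12}(3) and Theorem \ref{3}(2), which I am allowed to cite.

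For part (2), I would treat the two hypotheses in turn. Under "locally Gorenstein in codimension $<c$ with a canonical module $\omega$": the point is that the failure of part (1)'s clean duality is concentrated on the non-Gorenstein locus, which by hypothesis sits in codimension $\ge c$; since the test modules are free (hence the relevant $\Tor$/$\Ext$ modules behave well) in codimension $<c$, the discrepancy between $\at_n(R)$ and $\ae_n(R)$ is supported on the non-Gorenstein locus, so both ideals have the same radical as (a power of) the defining ideal of $\operatorname{NonGor}(R)$ — invoking Corollary \ref{13}, which is cited as the source. Concretely, replacing $R$ by the endomorphism ring construction or using $\omega$ to dualize MCM modules ($(-)^\dagger = \Hom_R(-, \omega)$ is a duality on MCM modules that agrees with $\Hom_R(-,R)$ after localizing at Gorenstein primes) lets one run the part-(1) argument away from $\operatorname{NonGor}(R)$, and the annihilator ideals of the Tor/Ext modules only differ by something vanishing off that locus; taking radicals absorbs the difference. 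Under "complete equicharacteristic with perfect residue field": here I would appeal to the existence of a canonical module (complete $\Rightarrow$ $\omega$ exists) together with the Artin approximation / Néron desingularization-type input that in the equicharacteristic case with perfect residue field one has enough regular behavior (e.g. the ring is excellent and a canonical module exists, and one can pass to a situation where $\operatorname{NonGor}$ is well-behaved) to again reduce to Corollary \ref{13} / Corollary \ref{12}(2); likely the perfect-residue-field hypothesis is there to guarantee $R$ is a homomorphic image of a regular local ring with the right separability so that Noether normalization / a Gorenstein subring argument applies.

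\textbf{Main obstacle.} I expect the crux to be part (2) under the non-Gorenstein hypothesis: one must show that the \emph{difference} between $\ann_R \Tor_i^R(M,N)$ and $\ann_R \Ext_R^i(M,N)$, as $(M,N)$ ranges over test pairs, is controlled purely by the non-Gorenstein locus — i.e. that after localizing at any prime $\p$ with $R_\p$ Gorenstein (which includes all $\p$ of codimension $<c$, but also more) the two annihilators localize to the same ideal. This requires the local comparison from part (1) to be \emph{stable under localization} and to interact correctly with the canonical-module duality $(-)^\dagger$, and then a Nullstellensatz-type step to pass from "equal after localizing at every Gorenstein prime" to "equal radicals." The equicharacteristic/perfect-residue-field case will likely be comparatively formal once the canonical-module case is in hand, since its role is just to \emph{supply} a canonical module (or a suitable Gorenstein approximation) where one is not assumed. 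Throughout, I would lean on Theorems \ref{3.12}(3), \ref{3}(2) and Corollaries \ref{13}, \ref{12}(2) exactly as the excerpt indicates, so the real work is assembling those pieces and verifying the codimension bookkeeping for the "free in codimension $<c$" condition under duals, syzygies, and localization.
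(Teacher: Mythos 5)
Your part (1) is essentially the paper's route: the artinian case via Matlis/canonical duality in dimension zero is exactly Corollary \ref{12}(2) (note that citing Theorems \ref{3.12}(3) or \ref{3}(2) instead would only give equality of radicals, not of the ideals themselves), and the Gorenstein case via converting Ext into Tor using duality, syzygies and cosyzygies, with the test class closed under these operations, is the same mechanism as Corollary \ref{13} (there implemented with the Auslander--Bridger transpose, Proposition \ref{19} and Lemma \ref{16}).

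Part (2) is where there are genuine gaps. For the case ``locally Gorenstein in codimension less than $c$ with a canonical module'' your plan is: localize at Gorenstein primes, conclude the two ideals agree off $\ng(R)$, and finish by a ``Nullstellensatz-type step.'' First, the side claim that both radicals coincide with (the radical of) the defining ideal of $\ng(R)$ is false: for a Gorenstein non-regular ring $\ng(R)=\emptyset$ while $\V(\at_n(R))\supseteq\sing R\neq\emptyset$. More seriously, even granting the localization step (itself delicate, since $\at_n(R)$ and $\ae_n(R)$ are infinite intersections over a category that does not localize well), agreement away from $\ng(R)$ says nothing about the two vanishing loci at the non-Gorenstein primes; to get $\V(\at_n(R))=\V(\ae_n(R))$ you need the nontrivial containment $\ng(R)\subseteq\sing R\subseteq\V(\at_n(R))\cap\V(\ae_n(R))$, which is exactly Lemma \ref{3.11}(3) and which your sketch never identifies. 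The paper avoids localization entirely: Lemma \ref{2} and Proposition \ref{6} (the spectral sequences $\Ext_R^p(\Tor_q^R(M,N),\omega)$ and $\Ext_R^p(\Ext_R^{-q}(M,N),\omega)$ plus the fact that $\trace\omega$ annihilates the relevant Ext modules) give the ideal-theoretic inclusions $(\trace\omega)^d\cdot(\t_n(\X,\Y))^{t+1}\subseteq\e^n(\X,\Y)$ and $(\e^n(\X,\Y))^{t+1}\subseteq\t_n(\X,\Y)$ of Theorem \ref{3}(1) with $t=d-c$, and then $\V(\trace\omega)=\ng(R)\subseteq\sing R\subseteq\V(\t_n(\cm_0(R)))$ absorbs the trace-ideal discrepancy. (Also, the result to invoke here is Theorem \ref{3}(2), not Corollary \ref{13}, which requires total reflexivity.)

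For the case ``complete equicharacteristic with perfect residue field'' your reduction does not go through: obtaining a canonical module and then ``reducing to Corollary \ref{13} / Corollary \ref{12}(2)'' is not available, since no Gorenstein or artinian hypothesis is present, and the non-Gorenstein locus may have dimension larger than $d-c$, so the first-case machinery (Theorem \ref{3}(2)) does not apply either. The paper's argument (Theorem \ref{3.12}(3)) is of a different nature: it proves that both $\V(\t_n(\X,\Y))$ and $\V(\e^n(\X,\Y))$ equal $\sing R$, where the inclusion $\V(\e^n(\X,\Y))\subseteq\sing R$ comes from the uniform annihilation theorem \cite[Proposition 4.8]{dim} (this is precisely where completeness and perfectness of the residue field enter, via \cite{W}), and the reverse inclusions come from Lemma \ref{3.11}(3) together with Proposition \ref{7}. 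Your appeal to Artin approximation/Noether normalization gestures in that direction but does not supply this uniform annihilator, which is the actual content of the case.
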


Denote by $\mod R$ the category of finitely generated $R$-modules, and by $\cm^c(R)$ the full subcategory of $\mod R$ consisting of maximal Cohen--Macaulay modules that are locally free in codimension less than $c$. 
As an application of Theorem \ref{14} we obtain the following result, which is the same as Corollary \ref{8}.

\begin{thm}\label{15} 
Consider the following conditions.
\begin{quote}
{\rm(i)}\ $\cm^c(R)$ has finite dimension.\qquad
{\rm(ii)}\ $\height\ae_n(R)\ge c$.\qquad
{\rm(iii)}\ $\height\at_n(R)\ge c$.\\
{\rm(iv)}\ $R$ is locally regular in codimension less than $c$.
\end{quote}
\begin{enumerate}[\rm(1)]
\item
The implications ${\rm(i)}\Rightarrow{\rm(ii)}\Leftrightarrow{\rm(iii)}\Rightarrow{\rm(iv)}$ hold when $R$ admits a canonical module.
\item
The implications ${\rm(i)}\Leftrightarrow{\rm(ii)}\Leftrightarrow{\rm(iii)}\Rightarrow{\rm(iv)}$ hold when $c=d$.    
\item
The implications ${\rm(i)}\Leftrightarrow{\rm(ii)}\Leftrightarrow{\rm(iii)}\Leftrightarrow{\rm(iv)}$ hold when $R$ is excellent, equicharacteristic and admits a canonical module. 
\end{enumerate}
\end{thm}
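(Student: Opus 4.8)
The plan is to derive everything from Theorem \ref{14}, from the behaviour of $\at_n(R)$ and $\ae_n(R)$ under localization, and from the theory around \cite[Theorem 1.1]{dim} relating the (finite) dimension of a resolving subcategory of $\cm^c(R)$ to a uniform annihilation of its cohomology. I would organize the argument around the two implications common to all three parts, ${\rm(ii)}\Leftrightarrow{\rm(iii)}$ and ${\rm(iii)}\Rightarrow{\rm(iv)}$, adding the converses ${\rm(ii)}\Rightarrow{\rm(i)}$ and ${\rm(iv)}\Rightarrow{\rm(iii)}$ under the stronger hypotheses of parts (2) and (3).

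For ${\rm(ii)}\Leftrightarrow{\rm(iii)}$ I would proceed as follows. A canonical module $\omega$ is available in parts (1) and (3), and may be assumed in part (2) after completing (which changes neither $c=d$ nor the heights in question and provides a canonical module). The duality $M\mapsto\Hom_R(M,\omega)$ and the spectral sequence $\Ext_R^p(\Tor^R_q(M,\Hom_R(N,\omega)),\omega)\Rightarrow\Ext_R^{p-q}(M,N)$, iterated through syzygies and cosyzygies, already give one inclusion between $\sqrt{\at_n(R)}$ and $\sqrt{\ae_n(R)}$, hence one of the two implications. For the other I would first prove ${\rm(iii)}\Rightarrow{\rm(iv)}$ (and, in the same way, that ${\rm(ii)}$ forces ${\rm(iv)}$) directly: using the comparison results of the previous sections, the hypothesis $\height\at_n(R)\ge c$ translates into a bound on the singular (and non-Gorenstein) locus of $R$, and contrapositively a singular prime $\p$ of height $<c$ is shown to violate that bound by producing, through high syzygies and cosyzygies of $R/\p$ and maximal Cohen--Macaulay approximation, modules in $\cm^c(R)$ whose $\Tor$- and $\Ext$-annihilators in degrees $>n$ descend into $\p$. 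Once ${\rm(iv)}$ is known, $R$ is locally Gorenstein in codimension $<c$, so Theorem \ref{14}(2) gives $\sqrt{\at_n(R)}=\sqrt{\ae_n(R)}$, and both directions of ${\rm(ii)}\Leftrightarrow{\rm(iii)}$ hold.

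For the implications involving ${\rm(i)}$: if $\cm^c(R)$ has finite dimension it is obtained, in boundedly many steps (extensions, direct summands, syzygies), from a single module $G$; pushing annihilators through these operations shows that $\ae_n(R)$ contains a fixed power of a product of finitely many ideals $\ann_R\Ext_R^j(G,G_s)$ with $j>n$ in a bounded range and $G_s$ in a finite list, and since $G$ is free in codimension $<c$ each of these has height $\ge c$, so $\height\ae_n(R)\ge c$; this proves ${\rm(i)}\Rightarrow{\rm(ii)}$. The converse ${\rm(ii)}\Rightarrow{\rm(i)}$ of parts (2) and (3) is the extension of \cite[Theorem 1.1]{dim} to $\ae_n$: from $\height\ae_n(R)\ge c$ one extracts, up to radical, a single ideal of height $\ge c$ annihilating all the relevant $\Ext$'s, and a reduction-to-a-generator argument then bounds the dimension of $\cm^c(R)$.

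The hard part is ${\rm(iv)}\Rightarrow{\rm(iii)}$ in part (3), i.e.\ upgrading local regularity in codimension $<c$ to the statement that one fixed power of the defining ideal of the non-regular locus annihilates $\Tor_i^R(M,N)$ for all $i>n$ and all $M,N\in\cm^c(R)$. Excellence makes the non-regular locus closed of height $\ge c$, and over its complement the modules of $\cm^c(R)$ become free, so each such $\Tor$ is killed by some power of that ideal; the obstacle is \emph{uniformity} of the exponent in $M$, $N$ and $i$. This is precisely where the excellent, equicharacteristic hypotheses are needed: they supply, via the generation theorems for maximal Cohen--Macaulay modules underlying \cite[Theorem 1.1]{dim}, a bound on the dimension (radius) of $\cm^c(R)$, reducing the infinitely many pairs and degrees to finitely much data coming from one generator and so yielding the uniform exponent. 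I expect this uniform-annihilation step to be the technical core of the whole proof, and to be the reason part (3) requires hypotheses that parts (1) and (2) do not.
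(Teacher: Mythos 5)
Your overall architecture does track the paper's proof of Corollary \ref{8}: the cheap direction (ii)$\Rightarrow$(iii) from $\ae_n(R)\subseteq\at_n(R)$, the reverse direction by first bounding the singular (hence non-Gorenstein) locus and then invoking the radical equality of Theorem \ref{14}(2) (i.e.\ Theorem \ref{3}(2)), (i)$\Rightarrow$(ii) via a single generator $G$ of $\cm^c(R)$, and the remaining converses via generation theorems. But there is a genuine gap at the pivotal step (iii)$\Rightarrow$(iv) (and your parallel claim that (ii) forces (iv)), which also underlies your (iii)$\Rightarrow$(ii) and your (ii)$\Rightarrow$(i). Your contrapositive plan --- given a singular prime $\p$ with $\height\p<c$, produce modules in $\cm^c(R)$ (high syzygies/cosyzygies of $R/\p$, MCM approximations) whose Tor- and Ext-annihilators in degrees $>n$ lie in $\p$ --- cannot work. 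Every $M\in\cm^c(R)$ is free at such a $\p$, so $\Tor_i^R(M,N)_\p=0=\Ext_R^i(M,N)_\p$ for all $i>0$ and all $N$, hence $\ann_R\Tor_i^R(M,N)\not\subseteq\p$ for every single pair and every degree; moreover the candidates you name are not in $\cm^c(R)$ at all, since when $R_\p$ is singular the nonfree locus of $\syz^j(R/\p)$ contains $\p$. Only the infinite intersection defining $\at_n(R)$ can land in $\p$, and proving that it does is exactly the uniform annihilation statement the paper isolates as Lemma \ref{3.11} (generalizing \cite[Propositions 4.5 and 4.6(1)]{dim}): an element $a\in\t_n(\cm_0(R))$ has a bounded power annihilating $\Tor_i^R(M,N)$ for \emph{all} finitely generated $M,N$ and all $i>n+4d$, whence $\sing R\subseteq\V(\t_n(\cm_0(R)))\subseteq\V(\at_n(R))$. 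Nothing in your proposal supplies this ingredient, and Remark \ref{10}(3) of the paper is precisely about the failure of the support-based reasoning your sketch relies on.

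Two secondary points. In part (2), ``completing changes neither $c=d$ nor the heights in question'' is not automatic: $\cm_0(\widehat R)$ could a priori be larger than the completions of modules in $\cm_0(R)$, so passing from $R$ to $\widehat R$ needs \cite[Corollary 3.3]{stcm}, and returning needs faithful flatness plus the descent-of-finite-rank argument of \cite[Theorem 5.11(2)]{radius}; this is exactly what Proposition \ref{20}(2) and the proof of Corollary \ref{8}(3) do. And in (i)$\Rightarrow$(ii), the second Ext argument cannot be reduced to a ``finite list $G_s$''; the paper instead enlarges it to all of $\mod R$ and uses $\e^0(G,\mod R)=\ann_R\lend_R(G)$ together with $\V(\ann_R\lend_R(G))=\nf(G)$ (Lemma \ref{1} and \cite[Proposition 5.1(1)]{dim}). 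Your reading of part (3) --- that its real content is the finite-dimension theorem for $\cm(R)$ over excellent equicharacteristic rings --- agrees with the paper (Proposition \ref{20}(1)), but even there the cycle closes as (iv)$\Rightarrow$(i)$\Rightarrow$(ii)$\Rightarrow$(iii)$\Rightarrow$(iv), with Lemma \ref{3.11} again indispensable for the last arrow.
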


When $n=0$, the main result of \cite{dim} (i.e. \cite[Theorem 1.1]{dim}) asserts Theorem \ref{15}(2) minus the implication (iii) $\Rightarrow$ (ii), and Theorem \ref{15}(3) for $c=d$ under the assumption that $R$ is complete and has perfect coefficient field.
Thus, Theorem \ref{15} highly refines the main result of \cite{dim}.  

The organization of this paper is as follows.
In Section 2, we state definitions and properties of fundamental notions used in this paper.
In Section 3, we explore annihilators of Tor and Ext modules over a general commutative noetherian ring. In Section 4, we focus on annihilators of Tor and Ext modules over a Cohen--Macaulay local ring.

\section{Preliminaries}

What we state in this section is used in the next sections.
We begin with our convention.

\begin{conv}
Throughout the present paper, let $R$ be a commutative noetherian ring.
We assume that all modules are finitely generated and all subcategories are strictly full.
Denote by $\mod R$ the category of (finitely generated) $R$-modules, and by $\cm(R)$ the subcategory of $\mod R$ consisting of maximal Cohen--Macaulay modules (recall that an $R$-module $M$ is called {\em maximal Cohen--Macaulay} if $\depth_{R_\p}M_\p=\dim R_\p$ for all $\p\in\supp_RM$).
We identify each $R$-module $M$ with the subcategory $\{M\}$ of $\mod R$ consisting of $M$ (since we assume all subcategories are strictly full, $\{M\}$ denotes the full subcategory of $\mod R$ consisting of those modules which are isomorphic to $M$).
We denote by $(-)^\ast$ the algebraic dual $\Hom_R(-,R)$. 
Whenever $R$ is a local ring, $(-)^\vee$ stands for the Matlis dual.
Whenever $R$ is a local Cohen--Macaulay ring with a canonical module $\omega$, we denote by $(-)^\dag$ the canonical dual $\Hom_R(-,\omega)$. 
\end{conv}

From now on, we state the definitions of notions used in the next sections together with a couple of their basic properties.

\begin{dfn}
Let $\X$ be a subcategory of $\mod R$.
\begin{enumerate}[(1)]
\item
We denote by $\add\X$ the subcategory of $\mod R$ consisting of direct summands of finite direct sums of modules in $\X$.
Note that $\add R$ equals the subcategory of $\mod R$ consisting of projective modules.
\item
We denote by $\underline\X$ the subcategory of $\mod R$ consisting of modules $M$ such that there is an isomorphism $M\oplus P\cong X\oplus Q$ with $P,Q\in\add R$ and $X\in\X$.
We say that $\X$ is {\em stable} if $\underline\X=\X$.
Note that $\underline {\underline \X}=\underline \X$, so that $\underline \X$ is stable.
Also, $\add R$ and $\mod R$ are stable.
\item
Suppose that $R$ is a local Cohen--Macaulay ring with a canonical module $\omega$ and that $\X$ is contained in $\cm(R)$.
We denote by $\overline\X$ the subcategory of $\cm(R)$ consisting of maximal Cohen--Macaulay modules $M$ such that there is an isomorphism $M\oplus I\cong X\oplus J$ with $I,J\in\add\omega$ and $X\in\X$.
We say that $\X$ is {\em costable} if $\overline\X=\X$.
Note that $\add\omega$ and $\cm(R)$ are costable. 
\end{enumerate}
\end{dfn} 

\begin{dfn}
For an $R$-module $M$, we denote by $\syz M$ the {\em (first) syzygy} of $M$, that is, the kernel of an epimorphism from a projective $R$-module to $M$.
For $n\ge1$ we inductively define the {\em $n$th syzygy} of $M$ by $\syz^nM=\syz(\syz^{n-1}M)$, and set $\syz^0M=M$.
The $n$th syzygy of $M$ is uniquely determined up to projective summands.
For a subcategory $\X$ of $\mod R$ and an integer $n\ge0$, we set $\syz^n\X=\underline{\{\syz^nX\mid X\in\X\}}$.
We say that a subcategory $\X$ of $\mod R$ is {\em closed under syzygies} if $\syz\X\subseteq\X$.
Note that $\syz\X=\syz\underline \X=\underline{\syz \X}$.
\end{dfn}

\begin{dfn}
For an $R$-module $M$ we denote by $\tr M$ the {\em (Auslander) transpose} of $M$.
This is defined as follows.
Take a projective presentation $P_1\xrightarrow{f}P_0\to M\to 0$.
Dualizing this by $R$, we get an exact sequence $0\to M^\ast\to P_0^\ast\xrightarrow{f^\ast}P_1\to\tr M\to0$, that is, $\tr M$ is the cokernel of the map $f^\ast$.
The transpose of $M$ is uniquely determined up to projective summands; see \cite{AB} for basic properties.
For a subcategory $\X$ of $\mod R$, we set $\tr\X=\underline{\{\tr X\mid X\in\X\}}$.
We say that $\X$ is {\em closed under transposes} if $\tr\X\subseteq\X$.
Note that there are equalities $\tr\X=\tr\underline \X=\underline{\tr\X}$.
\end{dfn}

\begin{dfn}
Let $\Phi$ be a subset of $\spec R$.
We define the {\em (Krull) dimension} of $\Phi$ by $\dim\Phi=\sup\{\dim R/\p\mid\p\in\Phi\}$.
\end{dfn}

\begin{dfn}
For an $R$-module $M$ we denote by $\nf(M)$ the {\em nonfree locus} of $M$, that is, the set of prime ideals $\p$ of $R$ such that the $R_\p$-module $M_\p$ is nonfree.
It is well-known and easy to see that $\nf(M)$ is a closed subset of $\spec R$ in the Zariski topology.
We set $\speco(R)=\spec R\setminus\Max R$ and call it the {\em punctured spectrum} of $R$.
Note that an $R$-module $M$ is locally free on $\speco(R)$ if and only if $\dim\nf(M)\le0$.
For each $n\ge0$, we denote by $\mod_{n}(R)$ the subcategory of $\mod R$ consisting of modules $M$ such that $\dim\nf(M)\le n$.
It is easy to see that $\mod_n(R)$ is stable and closed under syzygies and transposes. 
For an integer $n\ge0$ we set $\cm_{n}(R)=\cm(R)\cap\mod_{n}(R)$.
Note that $\cm_n(R)$ is stable and closed under syzygies if $R$ is Cohen--Macaulay.
\end{dfn}

\begin{dfn}
An $R$-module $M$ is said to be {\em totally reflexive} if $\Ext_R^i(M,R)=\Ext_R^i(\tr M,R)=0$ for all $i>0$.
This is equivalent to saying that the canonical map $M\to M^{\ast\ast}$ is an isomorphism (i.e., $M$ is reflexive) and $\Ext_R^i(M,R)=\Ext_R^i(M^\ast,R)=0$ for all $i>0$.
Every totally reflexive module is the syzygy of some totally reflexive module.
If $R$ is Cohen--Macaulay, then every totally reflexive $R$-module is maximal Cohen--Macaulay.
Also, $R$ is Gorenstein if and only if every maximal Cohen--Macaulay $R$-module is totally reflexive.
For more details of totally reflexive modules, we refer the reader to \cite{AB,C}.
We denote by $\G(R)$ the subcategory of $\mod R$ consisting of totally reflexive modules.
Note that $\G(R)$ is stable and closed under syzygies and transposes.
For an integer $n\ge0$ we set $\G_n(R)=\G(R)\cap\mod_n(R)$, which is stable and closed under syzygies and tranposes as well.
If $R$ is Gorenstein, then $\G_n(R)=\cm_n(R)$.
\end{dfn} 

\begin{dfn}
Let $R$ be a local Cohen--Macaulay ring with a canonical module $\omega$. 
\begin{enumerate}[(1)]
\item
For a maximal Cohen--Macaulay $R$-module $M$, we denote by $\cos M$ the {\em (first) cosyzygy} of $M$, that is, the maximal Cohen--Macaulay cokernel of a monomorphism to a module in $\add\omega$.
Cosyzygies always exist: taking an exact sequence $0\to\syz(M^\dag)\to R^{\oplus n}\to M^\dag\to0$ and dualizing it by $\omega$, one obtains an exact sequence $0\to M\to \omega^{\oplus n}\to(\syz(M^\dag))^\dag\to0$ of maximal Cohen--Macaulay modules, and then $(\syz(M^\dag))^\dag$ is the cosyzygy of $M$.
For $n\ge1$ we inductively define the {\em $n$th cosyzygy} of $M$ by $\cos^nM=\cos(\cos^{n-1}M)$, and set $\cos^0M=M$.
Note that the $n$th cosyzygy of $M$ is uniquely determined up to direct summands that belong to $\add\omega$. 
For a subcategory $\X$ of $\cm(R)$ and an integer $n\ge0$, we set $\cos^n\X=\overline{\{\cos^nX\mid X\in\X\}}$.
We say that $\X$ is {\em closed under cosyzygies} if $\cos\X\subseteq\X$.
\item
For a subcategory $\X$ of $\cm(R)$ we set $\X^\dag=\{X^\dag\mid X\in\X\}$.
Then
$$
\X^\dag\subseteq\X
\iff\X\subseteq\X^\dag
\iff\X=\X^\dag.
$$
We say that $\X$ is {\em closed under canonical duals} if one of these equivalent conditions holds.
\end{enumerate}
\end{dfn}

\begin{rem}
\begin{enumerate}[(1)]
\item
Suppose that a subcategory $\X$ of $\mod R$ is closed under syzygies.
Then for an exact sequence $0\to M\to P\to X\to0$ of $R$-modules with $X\in\X$ and $P\in\add R$, one has $M\in\X$.
The converse holds if $\X$ is stable.
\item
Suppose that a subcategory $\X$ of $\mod R$ is closed under transposes.
Then for an exact sequence $P_1\xrightarrow{f}P_0\to X\to0$ of $R$-modules with $X\in\X$ and $P_0,P_1\in\add R$, the cokernel of $f^\ast$ belongs to $\X$.
The converse holds if $\X$ is stable.
\item
Let $R$ be a local Cohen--Macaulay ring with a canonical module $\omega$.
Let $\X$ be a subcategory of $\cm(R)$.
Suppose that $\X$ is closed under cosyzygies.
Then for an exact sequence $0\to X\to I\to M\to0$ of maximal Cohen--Macaulay $R$-modules with $X\in\X$ and $I\in\add\omega$, one has $M\in\X$.
The converse holds if $\X$ is costable.
\end{enumerate} 
\end{rem} 

Now we recall the following notions from \cite{radius}.

\begin{dfn}
Let $R$ be a local ring
\begin{enumerate}[\rm(1)]
\item
For a subcategory $\X$ of $\mod R$, we put
$$
|\X|=\add \X,\qquad
[\X]=\add(\{R\}\cup\{\syz^i X\mid i\ge 0,X\in\X\}).
$$
\item
For subcategories $\X, \Y$ of $\mod R$, we denote by $\X \circ \Y$ the subcategory of $\mod R$ consisting of the $R$-modules $M$ which fits into an exact sequence $0\to X \to M \to Y \to 0$ with $X\in \X$ and $Y\in \Y$.
We set $\X * \Y=||\X| \circ |\Y||$ and $\X \bullet \Y=[[\X] \circ [\Y]]$.
\item
For a subcategory $\X$ of $\mod R$ and integer $r\ge 1$, inductively define $|\X|_r$ and $[\X]_r$ respectively, as follows.
$$
|\X|_r=\begin{cases}0&\text{ if }r=0,\\|\X| \quad  &\text{ if } r=1, \\ |\X|_{r-1}* \X \quad &\text { if } r\ge 2,
\end{cases}
\qquad
[\X]_r=\begin{cases}0&\text{ if }r=0,\\ [\X] \quad  &\text{ if } r=1, \\ [\X]_{r-1}\bullet \X \quad &\text { if } r\ge 2.
\end{cases}
$$
\item
Let $\X$ be a subcategory of $\mod R$.
The {\em radius} (resp. {\em dimension}) of $\X$, which is denoted by $\radius\X$ (resp. $\dim\X$), is by definition the infimum of the integers $n\ge 0$ such that $\X \subseteq [G]_{n+1}$ (resp. $\X = [G]_{n+1}$) for some $G\in \mod R$.
Similarly, the {\em size} (resp. {\em rank}) of $\X$, which is denoted by $\size\X$ (resp. $\rank\X$), is defined to be the infimum of the integers $n\ge 0$ such that $\X \subseteq |G|_{n+1}$ (resp. $\X =|G|_{n+1}$) for some $G\in \mod R$.
By definition, the radius, dimension, size and rank take values in $\mathbb N \cup \{\infty\}$.
Moreover, since $|\X|_n \subseteq [\X]_n$ for all $n\ge 1$, it holds that $\dim \X \ge \radius\X \le\size\X \le\rank\X$.
If $\X$ is moreover resolving, then $\dim \X \le \rank\X$. 
\end{enumerate}
\end{dfn}

\section{Annihilators over a commutative noetherian ring}

In this section we investigate annihilators of Tor and Ext over an arbitrary commutative noetherian ring $R$.
First of all, we give their definitions.

\begin{dfn}
Let $\X,\Y$ be subcategories of $\mod R$, and let $n\ge0$ be an integer.
We set

$$
\ttt_n(\X,\Y)=\bigoplus_{i>n}\bigoplus_{X\in\X}\bigoplus_{Y\in\Y}\Tor_i^R(X,Y),\qquad
\eee^n(\X,\Y)=\bigoplus_{i>n}\bigoplus_{X\in\X}\bigoplus_{Y\in\Y}\Ext^i_R(X,Y).
$$
We define the ideals $\t_n(\X,\Y)$ and $\e^n(\X,\Y)$ of $R$ by
\begin{align*}
\t_n(\X,\Y)&=\ann_R\ttt_n(\X,\Y)=\bigcap_{i>n}\bigcap_{X\in\X}\bigcap_{Y\in\Y}\ann_R\Tor_i^R(X,Y),\\
\e^n(\X,\Y)&=\ann_R\eee^n(\X,\Y)=\bigcap_{i>n}\bigcap_{X\in\X}\bigcap_{Y\in\Y}\ann_R\Ext_R^i(X,Y).
\end{align*}
Note that if $\X$ is closed under syzygies, $\t_n(\X,\Y)=\bigcap_{X\in\X}\bigcap_{Y\in\Y}\ann_R\Tor_{n+1}^R(X,Y)$ and $\e^n(\X,\Y)=\bigcap_{X\in\X}\bigcap_{Y\in\Y}\ann_R\Ext_R^{n+1}(X,Y)$.
We put $\t_n(\X)=\t_n(\X,\X)$ and $\e^n(\X)=\e^n(\X,\X)$.
\end{dfn}

\begin{rem}
Let $\X,\Y$ be subcategories of $\mod R$ and $n\ge0$ an integer.
Then it is easy to observe that
$$
\supp_R(\ttt_n(\X,\Y))\subseteq\V(\t_n(\X,\Y)),\qquad
\supp_R(\eee^n(\X,\Y))\subseteq\V(\e^n(\X,\Y)).
$$
As we will see in Remark \ref{10}(3), these inclusions are not necessarily equalities.
Thus, it is not sufficient to investigate the supports of the modules $\ttt_n(\X,\Y)$ and $\eee^n(\X,\Y)$ to get the structure of (the radicals of) the ideals $\t_n(\X,\Y)$ and $\e^n(\X,\Y)$.
\end{rem}

Let $M,N$ be $R$-modules.
We denote by $\lhom_R(M,N)$ the quotient of $\Hom_R(M,N)$ by homomorphisms $M\to N$ factoring through projective $R$-modules.
We set $\lend_R(M)=\lhom_R(M,M)$.
The following lemma yields an isomorphism between Tor and Ext modules.

\begin{lem}\label{16}
For $R$-modules $M,N$ one has an isomorphism $\Ext_R^1(\tr\syz\tr\syz M,N)\cong\Tor_1^R(\tr\syz M,N)$.
\end{lem}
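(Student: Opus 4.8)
The strategy is to exploit the standard identities relating syzygies, transposes, Tor and Ext, namely the functorial exact sequences coming from the Auslander transpose. Recall the classical fact (see \cite{AB}) that for any $R$-module $L$ and any $R$-module $N$ there is an exact sequence
$$
0\to\Ext_R^1(\tr L,N)\to L\otimes_R N\to\Hom_R(L^\ast,N)\to\Ext_R^2(\tr L,N)\to0,
$$
and dually a functorial isomorphism $\Tor_1^R(\tr L,N)\cong\Ext_R^1(\tr\tr L,N)$ whenever one keeps track of projective summands; more precisely, for $i\ge 1$ there is an isomorphism $\Tor_i^R(\tr L,N)\cong\Ext_R^{i+1}(L,N)$ modulo the usual ambiguity, together with $\tr\syz L\simeq\syz\tr L$ up to projective summands. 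The idea is to apply these with $L=\syz M$.

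First I would set $L=\syz M$ and use the stable isomorphism $\tr\syz L\cong\syz\tr L$ to rewrite $\tr\syz\tr\syz M$. Since transpose and syzygy commute up to projective summands, $\tr\syz\tr\syz M$ is stably isomorphic to $\tr\tr\syz\syz M$, and $\tr\tr$ is the identity on the stable category (for modules with no free summands, $\tr\tr L\cong L$), so $\tr\syz\tr\syz M\simeq\syz^2 M$ up to projective summands. Hence the left-hand side becomes $\Ext_R^1(\syz^2 M,N)$, which by dimension shifting equals $\Ext_R^3(M,N)$ modulo projectives (more carefully, $\Ext_R^1(\syz^k M,N)\cong\Ext_R^{k+1}(M,N)$ for $k\ge 1$). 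Next, for the right-hand side, I would apply the transpose–Tor identity to $\tr\syz M$: writing $K=\syz M$, one has $\Tor_1^R(\tr K,N)\cong\Ext_R^2(K,N)\cong\Ext_R^3(M,N)$, again up to the standard identifications. Comparing the two computations yields the asserted isomorphism.

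The one genuine subtlety — and the step I expect to be the main obstacle — is bookkeeping of projective summands and the precise form of the transpose identities, since $\tr$ and $\syz$ are only well-defined up to projective summands, and one must check that the isomorphism in the statement is genuine (an honest isomorphism, not merely a stable one). The cleanest route is to fix explicit projective presentations throughout: choose a free resolution $\cdots\to F_2\to F_1\to F_0\to M\to 0$, realize $\syz M=\operatorname{im}(F_1\to F_0)$ and $\syz^2 M=\operatorname{im}(F_2\to F_1)$ with these specific free modules, and compute $\tr$ of each using the induced presentations; then the vanishing of $\Tor_{\ge 1}$ and $\Ext_{\ge 1}$ of free modules makes all the connecting maps in the relevant exact sequences isomorphisms on the nose, so no stray projective summands appear. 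With that care, chasing the two four-term exact sequences (the one computing $\Ext^1(\tr(-),N)$ and the one computing $\Tor_1(\tr(-),N)$) against each other produces the stated isomorphism directly, and the proof is essentially a diagram chase once the presentations are pinned down.
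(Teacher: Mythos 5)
Your proposal rests on two ``classical facts'' that are in fact false in general, and the argument collapses at exactly those points. First, $\tr$ and $\syz$ do \emph{not} commute up to projective summands: for $R=k[[x,y]]$ and $L=k$ one has $\tr\syz k=\tr\m\cong k$ while $\syz\tr k\cong\syz\m\cong R$, which is stably trivial. Consequently your reduction $\tr\syz\tr\syz M\simeq\tr\tr\syz^2M\simeq\syz^2M$ fails: with $M=k$ over $R=k[[x,y]]$ one gets $\tr\syz\tr\syz k\cong k$, whereas $\syz^2k\cong R$; your route would then give $\Ext_R^1(\tr\syz\tr\syz M,N)\cong\Ext_R^3(M,N)=0$, although $\Ext_R^1(k,N)\neq0$ for $N=k$. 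Second, the asserted isomorphism $\Tor_i^R(\tr L,N)\cong\Ext_R^{i+1}(L,N)$ is also false in general: with $L=\m$ as above, $\Tor_1^R(\tr\m,N)=\Tor_1^R(k,N)\neq0$ for $N=k$, while $\Ext_R^2(\m,N)=\Ext_R^3(k,N)=0$. Such identities do hold under strong extra hypotheses (for instance when $L$ is totally reflexive, where the relevant $\Ext^i(L,R)$ and $\Ext^i(\tr L,R)$ vanish), but nothing of the sort is assumed here, and the lemma is stated for arbitrary $M,N$.

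The missing idea is the one the paper actually uses: Auslander--Bridger's exact sequence, which for any module $A$ gives $0\to\Ext_R^1(\tr\syz A,N)\to\Tor_1^R(A,N)\to\Hom_R(\Ext_R^1(A,R),N)$, applied to $A=\tr\syz M$. The whole content of the lemma is then the vanishing $\Ext_R^1(\tr\syz M,R)=0$, i.e.\ the fact that $\syz M$, being a first syzygy, is $1$-torsionfree; this kills the third term and forces the first map to be an isomorphism. Your argument never invokes this torsionfreeness, which is the essential input (the statement is false if $\tr\syz M$ is replaced by an arbitrary module $A$). Finally, the subtlety you flag about honest versus stable isomorphisms is not the real obstacle: since $\Ext_R^1(-,N)$ and $\Tor_1^R(-,N)$ annihilate projective summands, the isomorphism classes of both sides are independent of the choices made in forming $\syz$ and $\tr$, so no bookkeeping of presentations is needed once the correct exact sequence is in place.
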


\begin{proof}
An exact sequence $0\to\Ext_R^1(\tr\syz\tr\syz M,N)\to\Tor_1^R(\tr\syz M,N)\to\Hom_R(\Ext_R^1(\tr\syz M,R),N)$ exists by \cite[Theorem (2.8)]{AB}.
As $\syz M$ is $1$-torsionfree (\cite[Definition (2.15)]{AB}), we have $\Ext_R^1(\tr\syz M,R)=0$.
Thus the isomorphism in the lemma is obtained.
\end{proof}

\begin{rem}
Here is another proof of Lemma \ref{16}, which may be easier for the reader who is familiar with Auslander's approximation theory:
There is an exact sequence $0\to\syz M\xrightarrow{f}P\to\tr\syz\tr\syz M\to0$ such that $f$ is a left $(\add R)$-approximation. 
An exact sequence $\Hom_R(P,N)\xrightarrow{f'}\Hom_R(\syz M,N)\to\Ext_R^1(\tr\syz\tr\syz M,N)\to0$ is induced.
We can verify that the image of $f'$ coincides with the set of homomorphisms $\syz M\to N$ factoring through projective modules.
We obtain an isomorphism $\Ext_R^1(\tr\syz\tr\syz M,N)\cong\lhom_R(\syz M,N)$.
Combining this with \cite[Lemma (3.9)]{Y} deduces the lemma.
\end{rem}

Using the above lemma, we obtain the following proposition on annihilators.

\begin{prop}\label{19}
Let $\X$ be a subcategory of $\mod R$.
Suppose that $\X$ is contained in $\syz(\mod R)$, and closed under syzygies and transposes.
Then for all integers $n\ge0$ and subcategories $\Y$ of $\mod R$ one has $\t_n(\X,\Y)=\e^n(\X,\Y)$.
\end{prop}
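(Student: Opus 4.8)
The plan is to reduce the statement to Lemma \ref{16} by exploiting the hypotheses on $\X$ to identify, up to projective summands, each module in $\X$ with something of the form $\tr\syz\tr\syz M$ and $\tr\syz M$ simultaneously. First I would fix $X\in\X$ and $Y\in\mod R$ and $i>n$. Since $\X$ is closed under syzygies and stable (being of the form $\underline{(-)}$ is not assumed, but closure under syzygies together with containment in $\syz(\mod R)$ will let me run the argument after passing to a high enough syzygy), I would first observe that $\Tor_i^R(X,Y)\cong\Tor_1^R(\syz^{i-1}X,Y)$ and $\Ext_R^i(X,Y)\cong\Ext_R^1(\syz^{i-1}X,Y)$ up to the usual projective ambiguity, and that $\syz^{i-1}X$ again lies in $\X$. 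So it suffices to prove $\ann_R\Tor_1^R(X,Y)=\ann_R\Ext_R^1(X,Y)$ for every $X\in\X$, $Y\in\mod R$.

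Next I would use that $X\in\syz(\mod R)$ and that $\X$ is closed under syzygies and transposes. Write $X=\syz M$ for some $M$ (this uses $X\in\syz(\mod R)$). Then $\tr\syz M=\tr X\in\tr\X\subseteq\X$ up to projective summands, and likewise $\syz\tr X\in\X$ and $\tr\syz\tr X\in\X$. The key point is that $\tr\syz\tr\syz\tr X$ and $\tr\syz\tr X$ differ from $X$ and $\tr X$ only by projective summands: indeed $\tr\syz\tr\,(-)$ is, up to projectives, the identity on modules of the form $\syz N$ (it is essentially the "$1$-torsionfree-ization", and $\syz N$ is already $1$-torsionfree by \cite[Definition (2.15)]{AB}), so $\tr\syz\tr(\syz M)\cong\syz M\oplus(\text{proj})=X\oplus(\text{proj})$. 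Applying Lemma \ref{16} with the module "$\tr X$" in place of "$M$" (so that $\syz\tr X$ plays the role of $\syz M$) gives
$$
\Ext_R^1(\tr\syz\tr\syz\tr X,\,Y)\cong\Tor_1^R(\tr\syz\tr X,\,Y).
$$
Now $\tr\syz\tr\syz\tr X\cong \tr\syz\tr X$ does not simplify to $X$ directly, so instead I would apply the lemma with "$M$" chosen so that $\tr\syz M\cong X$ up to projectives: take $M$ with $\syz M=\syz\tr X'$ where $X'$ is another syzygy representative — more cleanly, since $X=\syz M$, set the lemma's "$M$" to be any module with $\tr\syz(\text{its }M)\cong X$; such an $M$ exists because $\tr X\in\X\subseteq\syz(\mod R)$, say $\tr X=\syz M_0$, and then $\tr\syz M_0=\tr\tr X\cong X$ up to projectives (as $X$ is a syzygy, hence torsionless, hence $\tr\tr X\cong X$ mod projectives). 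With this $M_0$, Lemma \ref{16} yields $\Ext_R^1(\tr\syz\tr\syz M_0,Y)\cong\Tor_1^R(\tr\syz M_0,Y)$, and $\tr\syz M_0\cong X$, while $\tr\syz\tr\syz M_0=\tr\syz\tr(\syz M_0)=\tr\syz\tr\tr X\cong\tr\syz X\oplus(\text{proj})$. Since $\tr\syz X=\tr\tr M\cong M$... — I would need to be careful here, but the upshot I am aiming for is an isomorphism, modulo summands that are projective and hence contribute nothing to $\Tor_{>0}$ or $\Ext^{>0}$, between $\Ext_R^1(X,Y)$ and $\Tor_1^R(X,Y)$. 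Taking annihilators, then intersecting over all $X\in\X$, $Y\in\Y$, and $i>n$ gives $\e^n(\X,\Y)=\t_n(\X,\Y)$.

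The main obstacle is precisely the bookkeeping in the previous paragraph: Lemma \ref{16} is stated for the specific composite $\tr\syz\tr\syz M$ versus $\tr\syz M$, and I must choose the input module so that these composites collapse (up to projective summands, which are invisible to higher Tor and Ext) to the given $X$. The cleanest route is to prove a small auxiliary fact first: if $X$ is closed under syzygies and transposes and $X\subseteq\syz(\mod R)$, then for every $X\in\X$ there is $X'\in\X$ with $\tr\syz\tr\syz X'\cong X'\oplus(\text{proj})$ and $\tr\syz X'\cong X\oplus(\text{proj})$; this follows from $\tr\tr N\cong N$ mod projectives for torsionless $N$ and from the fact that syzygies are $1$-torsionfree. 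Granting this, Lemma \ref{16} applied to $X'$ finishes the proof. I expect no serious difficulty beyond this identification; everything else is the routine reduction from higher (co)homological degrees to degree one and the observation that projective summands do not affect $\ann_R\Tor_{>0}$ or $\ann_R\Ext^{>0}$.
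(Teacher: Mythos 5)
Your reduction to degree one is fine, and your closing ``auxiliary fact'' is in substance the paper's own argument, but as written the proposal has a genuine gap: the statement you say you are aiming for is false. You want an isomorphism, modulo projective summands, between $\Ext_R^1(X,Y)$ and $\Tor_1^R(X,Y)$ for each individual $X\in\X$. No choice of input module for Lemma \ref{16} can produce this, because it already fails in the simplest situation covered by the proposition: for $R=k[\![x,y]\!]/(xy)$ and $X=Y=R/(x)$ (so $X\in\G(R)=\cm(R)$, a subcategory satisfying all the hypotheses) one computes $\Tor_1^R(X,Y)\cong k$ while $\Ext_R^1(X,Y)=0$. What Lemma \ref{16} actually gives --- and what your $M_0$-computation, done correctly, produces --- is $\Tor_1^R(X,Y)\cong\Ext_R^1(\tr\syz X,Y)$: the matching permutes the modules of $\X$ instead of fixing each one, and the equality $\t_n(\X,\Y)=\e^n(\X,\Y)$ is an equality of intersections over the whole family, coming from the fact that $\tr\syz$ carries $\X$ onto $\X$ up to projective summands. (Also, the identification ``$\tr\syz X=\tr\tr M$'' in your middle paragraph is wrong: with $X=\syz M$ one has $\tr\syz X=\tr\syz^2M$.)

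Concerning the auxiliary fact: unwound (necessarily $X'\cong\tr\syz X$), it says exactly that $\tr\syz\tr\syz X\cong X$ up to projectives for every $X\in\X$. This is true under the hypotheses, but it does not follow merely from ``$\tr\tr N\cong N$ mod projectives'' and ``syzygies are $1$-torsionfree''; the essential input is \cite[Theorem (2.17)]{AB}, which says that a $1$-torsionfree module $N$ satisfies $N\cong\syz\tr\syz\tr N$ up to projectives. Applying this to $\tr X$ (which lies in $\X\subseteq\syz(\mod R)$, hence is $1$-torsionfree) and then applying $\tr$ together with $\tr\tr\cong\id$ yields $X\cong\tr\syz\tr\syz X$; equivalently, as in the paper's proof, $\X=\syz\X=\tr\X$, hence $\X=\tr\syz\X=\tr\syz\tr\syz\X$. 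Granting this, Lemma \ref{16} identifies the family $\{\Tor_1^R(\tr\syz X',Y)\}_{X'\in\X}$ with $\{\Ext_R^1(\tr\syz\tr\syz X',Y)\}_{X'\in\X}$ term by term, and both inclusions between the two annihilator ideals follow; but the conclusion must be drawn at this family level (two re-parametrizations of $\X$ via $\tr\syz$ and $\tr\syz\tr\syz$), not from a per-module isomorphism, which does not exist.
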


\begin{proof}
Note that $\e^n(\X,\Y)=\e^n(\underline \X,\Y)$ and $\t_n(\X,\Y)=\t_n(\underline \X,\Y)$.
Replacing $\X$ by $\underline\X$, we may assume $\X$ is stable.
Any $R$-module $M$ satisfies $\tr\tr M\cong M$ up to projective summands by \cite[Proposition (2.6)]{AB}, which implies $\X=\tr\X$.
Let $X\in\X$.
Then $X$ is a syzygy, and hence it is $1$-torsionfree by \cite[Theorem (2.17)]{AB}.
Therefore $X\cong\syz\tr\syz\tr X$ up to projective summands by \cite[Theorem (2.17)]{AB} again.
As $\X$ is closed under syzygies and transposes, we get $\tr\syz\tr X\in\X$, and $X\in\syz\X$.
Thus $\X=\syz\X$, and $\X=\tr\X=\tr\syz\X$.
There are equalities
\begin{align}
\label{17}\t_n(\X,\Y)&=\t_0(\syz^n\X,\Y)=\t_0(\X,\Y)=\textstyle\bigcap_{X\in\X,Y\in\Y}\ann_R\Tor_1^R(X,Y),\\
\label{18}\e^n(\X,\Y)&=\e^0(\syz^n\X,\Y)=\e^0(\X,\Y)=\textstyle\bigcap_{X\in\X,Y\in\Y}\ann_R\Ext_R^1(X,Y).
\end{align}
Using the equalities $\X=\tr\syz\X=\tr\syz\tr\syz\X$ and Lemma \ref{16}, we observe that the last terms in \eqref{17} and \eqref{18} coincide.
\end{proof}

The following corollary is a direct consequence of Proposition \ref{19}, which gives part of Theorem \ref{14}(1).

\begin{cor}\label{13}
Let $n,t\ge0$ be any integers and let $\C$ be any subcategory of $\mod R$.
Then the equality $\t_n(\G_t(R),\C)=\e^n(\G_t(R),\C)$ holds.
In particular, $\t_n(\cm_t(R))=\e^n(\cm_t(R))$ if $R$ is Gorenstein.
\end{cor}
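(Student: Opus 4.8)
The plan is to derive Corollary \ref{13} from Proposition \ref{19} by checking that the subcategory $\G_t(R)$ satisfies the three hypotheses of that proposition: it is contained in $\syz(\mod R)$, and it is closed under syzygies and transposes. All three facts are recorded in the Preliminaries. Indeed, by the definition of totally reflexive modules, $\G(R)$ is stated there to be stable and closed under syzygies and transposes, and $\G_t(R)=\G(R)\cap\mod_t(R)$ is likewise stable and closed under syzygies and transposes; since $\mod_t(R)$ is also closed under syzygies and transposes, the intersection inherits these properties. For the containment in $\syz(\mod R)$, recall the remark that every totally reflexive module is the syzygy of some totally reflexive module, so $\G(R)\subseteq\syz(\mod R)$, and a fortiori $\G_t(R)\subseteq\syz(\mod R)$.

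First I would state explicitly that $\X=\G_t(R)$ meets the hypotheses of Proposition \ref{19}, citing the relevant sentences of Section 2. Then I would apply Proposition \ref{19} with this $\X$ and with $\Y=\C$ (an arbitrary subcategory, exactly as allowed in the proposition) to conclude $\t_n(\G_t(R),\C)=\e^n(\G_t(R),\C)$ for every $n\ge0$. This already proves the first assertion.

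For the ``in particular'' clause, I would invoke the fact, also recorded in Section 2, that $R$ is Gorenstein if and only if every maximal Cohen--Macaulay $R$-module is totally reflexive; more precisely, when $R$ is Gorenstein one has $\G_t(R)=\cm_t(R)$ (this equality is stated right after the definition of $\G_n(R)$). Substituting $\C=\cm_t(R)=\G_t(R)$ into the first assertion yields $\t_n(\cm_t(R))=\t_n(\G_t(R),\G_t(R))=\e^n(\G_t(R),\G_t(R))=\e^n(\cm_t(R))$, which is the claimed equality. I would remark in passing that this recovers part of Theorem \ref{14}(1) in the Gorenstein case.

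There is no serious obstacle here: the proof is essentially a verification that the hypotheses of Proposition \ref{19} apply, and then a specialization of the conclusion. The only point that requires a moment's care is confirming that $\G_t(R)$ (rather than just $\G(R)$) is closed under syzygies and transposes and is contained in $\syz(\mod R)$ --- but this is immediate from the stability and closure properties of both $\G(R)$ and $\mod_t(R)$ together with the observation that a totally reflexive module is always a syzygy of a totally reflexive module, hence of a module in $\mod_t(R)$ once we know the module itself lies in $\mod_t(R)$ and $\mod_t(R)$ is closed under the relevant operations.
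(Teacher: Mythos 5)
Your proposal is correct and follows exactly the route the paper intends: Corollary \ref{13} is stated there as a direct consequence of Proposition \ref{19}, and your verification that $\G_t(R)$ is contained in $\syz(\mod R)$ and closed under syzygies and transposes (using the facts recorded in Section 2), followed by the specialization $\G_t(R)=\cm_t(R)$ in the Gorenstein case, is precisely the intended argument.
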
 

The following example illustrates the usefulness of Corollary \ref{13}, where it is easy to compute the annihilator of $\Tor$, and hence we can conclude what is the annihilator of $\Ext$.

\begin{ex}
Let $k$ be a field and consider the hypersurface $R=k[\![x,y]\!]/(x^2)$.
By virtue of \cite[Proposition 4.1]{BGS}, we have that
$$
R, (x), (x,y), (x,y^2), (x,y^3),\dots
$$
give a complete list of isomorphism classes of indecomposable maximal Cohen--Macaulay $R$-modules.
Among these, all the modules except $(x)$ are locally free on punctured spectrum of $R$.
We get
\begin{align*}
\t_0(\cm_0(R))
&\textstyle=\bigcap_{h>0}\bigcap_{M,N\in\cm_0(R)}\ann_R\Tor_h^R(M,N)
=\bigcap_{h,i,j>0}\ann_R\Tor_h^R(I_i,I_j)\\
&\textstyle=\bigcap_{h>0,i\ge j>0}\ann_R\Tor_h^R(I_i,I_j)
=\bigcap_{h>0,i\ge j>0}\ann_R\Tor_{h+1}^R(I_i,R/I_j),
\end{align*}
where we set $I_i=(x,y^i)$ for each $i>0$.
The minimal free resolution of the $R$-module $I_i$ is
$$
F(i)=(\cdots\xrightarrow{\left(\begin{smallmatrix}x&y^i\\0&-x\end{smallmatrix}\right)}R^{\oplus2}\xrightarrow{\left(\begin{smallmatrix}x&y^i\\0&-x\end{smallmatrix}\right)}R^{\oplus2}\xrightarrow{\left(\begin{smallmatrix}x&y^i\\0&-x\end{smallmatrix}\right)}R^{\oplus2}\to0).
$$
For any integers $i\ge j>0$, a complex
$$
F(i)\otimes_RR/I_j=(\cdots\xrightarrow{0}(R/I_j)^{\oplus2}\xrightarrow{0}(R/I_j)^{\oplus2}\xrightarrow{0}(R/I_j)^{\oplus2}\to0)
$$
is induced, which shows that $\Tor_p^R(I_i,R/I_j)\cong\H_p(F(i)\otimes_RR/I_j)\cong(R/I_j)^{\oplus 2}$ for all integers $p\ge0$ and $i\ge j>0$.
Thus we obtain
$$
\textstyle\t_0(\cm_0(R))=\bigcap_{h>0,i\ge j>0}\ann_R(R/I_j)^{\oplus2}=\bigcap_{j>0}I_j=(x),
$$
where the last equality is a consequence of Krull's intersection theorem.
Hence, by Corollary \ref{13}, we obtain $\e^0(\cm_0(R))=(x)$.
Note that indeed this can be obtained by a direct computation for Ext without appealing to our Corollary \ref{13}, but the computation is more complicated; in the above computation we use the general isomorphism $\Tor_h^R(I_i,I_j)\cong\Tor_{h+1}^R(I_i,R/I_j)$, while in general $\Ext^h_R(I_i,I_j)\ncong\Ext^{h+1}_R(I_i,R/I_j)$.
\end{ex}

To prove our next proposition, we establish a lemma.

\begin{lem}\label{1}
Let $M$ be an $R$-module.
Then
$$\textstyle
\t_0(M,\mod R)=\ann_R\lend_R(M)=\ann_R\Ext_R^1(M,\syz M)
=\e^0(M,\mod R).
$$ 
\end{lem}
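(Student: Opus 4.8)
The plan is to prove the four-term chain of equalities by establishing two central isomorphisms and then reading off annihilators. The first key step is the isomorphism $\lend_R(M) \cong \Ext_R^1(M, \syz M)$. To get this, apply $\Hom_R(M,-)$ to a short exact sequence $0 \to \syz M \to P \to M \to 0$ with $P$ projective, obtaining the exact sequence $\Hom_R(M,P) \xrightarrow{\pi} \Hom_R(M,M) \to \Ext_R^1(M,\syz M) \to \Ext_R^1(M,P)$. The subtle point is that the last term $\Ext_R^1(M,P)$ need not vanish, so one cannot immediately conclude; instead I would argue that the image of $\pi$ is exactly the submodule of $\Hom_R(M,M)$ consisting of endomorphisms factoring through a projective module. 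One inclusion is clear since $P$ is projective; for the other, a homomorphism $M \to M$ factoring through a projective $Q$ lifts along $P \twoheadrightarrow M$ by projectivity of $Q$, hence lies in the image of $\pi$. This identifies $\operatorname{coker}\pi$ with $\lend_R(M)$, and then one must still see that the connecting map $\Hom_R(M,M) \to \Ext_R^1(M,\syz M)$ is surjective onto all of $\Ext_R^1(M,\syz M)$ — this follows because the map $\Hom_R(M,P)\to\Hom_R(M,M)$ already has image equal to the kernel of the connecting map, and surjectivity onto $\Ext^1$ comes from $\Ext_R^1(M,P)$ being split off appropriately, or more simply: any element of $\Ext^1_R(M,\syz M)$ pulled back along $\syz M \hookrightarrow P$ becomes an element of $\Ext^1_R(M,P)$, and one checks the relevant portion of the long exact sequence gives exactness at $\Ext_R^1(M,\syz M)$. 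Thus $\ann_R\lend_R(M) = \ann_R\Ext_R^1(M,\syz M)$.

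The second step is to identify $\t_0(M,\mod R)$ with $\ann_R\lend_R(M)$. Here I would use a standard fact from Auslander--Bridger theory: for any $R$-module $N$, there is a natural isomorphism $\Tor_1^R(\tr M, N) \cong \lhom_R(M,N)$ when $M$ is, say, arbitrary and $\tr M$ denotes the transpose — more precisely, from the defining presentation $P_1 \xrightarrow{f} P_0 \to M \to 0$ one has $\tr M = \operatorname{coker} f^\ast$, and tensoring the exact sequence $0 \to M^\ast \to P_0^\ast \xrightarrow{f^\ast} P_1^\ast \to \tr M \to 0$ (split into two short exact sequences) with $N$ yields $\Tor_1^R(\tr M, N) \cong \lhom_R(M,N)$. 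But note $\t_0(M,\mod R) = \bigcap_{N}\ann_R\Tor_i^R(M,N)$ over $i>0$ and all $N$; since $\syz(\mod R)$ is cofinal for computing higher Tor, and since taking $N$ to range over all finitely generated modules, the intersection $\bigcap_N \ann_R \Tor_1^R(M,N)$ already captures everything — here I need that $\Tor_i^R(M,N) = \Tor_1^R(M, \syz^{i-1}N)$ up to projective summands of the syzygy, which is standard. Combining, $\t_0(M,\mod R) = \bigcap_N \ann_R\Tor_1^R(M,N)$; but one must be careful that it is $\tr M$, not $M$, appearing in the $\lhom$ identification. The cleaner route: use that $\Tor_1^R(M,N)$ for all $N$ detects exactly the annihilator of $\lend$ via the universal choice $N = \syz M$ together with a surjectivity/naturality argument, or alternatively observe $\bigcap_N\ann_R\Tor_1^R(M,N) = \ann_R\Tor_1^R(M, \bigoplus_N N)$ is governed by the module $\Tor_1^R(M,-)$ as a functor, which is represented (up to the relevant identification) by $\lend$ through the transpose.

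The third step is the symmetric statement $\e^0(M,\mod R) = \ann_R\Ext_R^1(M,\syz M)$. Again $\e^0(M,\mod R) = \bigcap_N \ann_R\Ext_R^1(M,N)$ since $\Ext_R^i(M,N) = \Ext_R^1(M, \syz^{i-1}$-type cosyzygy$)$ — actually more directly $\Ext_R^i(M,N)\cong\Ext_R^1(\syz^{i-1}M,N)$, so to handle the intersection over $i$ one instead notes $\Ext^i_R(M,N)$ is a subquotient behaviour; the honest statement is $\bigcap_{i>0}\bigcap_N \ann_R\Ext^i_R(M,N) = \bigcap_N\ann_R\Ext^1_R(M,N)$ because $\syz^{i-1}N$ ranges within $\mod R$. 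Then $\bigcap_N\ann_R\Ext_R^1(M,N)$: the inclusion $\subseteq$ is immediate taking $N = \syz M$; for $\supseteq$, any $a$ killing $\Ext_R^1(M,\syz M)$ kills $\Ext^1_R(M,N)$ for all $N$ because an arbitrary $N$ receives a surjection from a projective with kernel $\syz N$, and one compares $\Ext^1_R(M,N)$ with $\Ext^1_R(M,P)=0$... wait, here one rather uses that $\Ext^1_R(M,N)$ is a quotient of $\Ext^1_R(M,\syz' )$-type terms — the precise argument is that $\Ext^1_R(M,-)$ applied to $0\to N'\to P'\to N\to 0$ gives $\Ext^1_R(M,N')\to\Ext^1_R(M,P')=0$... that goes the wrong way. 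Instead: use that every $N$ embeds suitably, or better, use the coresolution-free fact that $\Ext^1_R(M,N) \cong \lhom_R(\syz M, N)/(\text{image from }\lhom_R(P,N))$ hmm. The cleanest is: $\Ext^1_R(M,N)$ for varying $N$ is, by the long exact sequence from $0\to\syz M\to P\to M\to 0$, the cokernel of $\Hom_R(P,N)\to\Hom_R(\syz M,N)$, i.e. $\Ext^1_R(M,N)\cong\lhom_R(\syz M,N)$ (homomorphisms from $\syz M$ modulo those factoring through $P$, equivalently through any projective). Then $\bigcap_N\ann_R\lhom_R(\syz M,N)$: taking $N=\syz M$ gives $\ann_R\lend_R(\syz M)$, and by the same argument run with $M$ replaced by $\syz M$ — or by a direct naturality argument — this intersection equals $\ann_R\lend_R(\syz M)$. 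Finally reconcile $\ann_R\lend_R(\syz M) = \ann_R\lend_R(M)$: an element $a$ kills $\lend_R(M)$ iff $a\cdot\id_M$ factors through a projective, and one shows $a\cdot\id_M$ factors through a projective $\iff$ $a\cdot\id_{\syz M}$ factors through a projective, by applying/unapplying $\syz$ to the factorization diagram. Putting all three identifications together closes the chain.

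The main obstacle I anticipate is the careful bookkeeping around surjectivity of connecting maps and the non-vanishing of $\Ext^1_R(M, P)$ for non-projective-resolving situations; the equality $\lend_R(M)\cong\Ext^1_R(M,\syz M)$ is standard only when stated correctly, and the subtlety is that one must identify the image of $\Hom_R(M,P)\to\Hom_R(M,M)$ with the factor-through-projective ideal rather than hoping the $\Ext^1_R(M,P)$ term disappears. The second potential snag is the reduction of the infinite intersections over $i>0$ to the single degree $i=1$; this is routine when $M$ (or the relevant category) is closed under syzygies, but here $M$ is a single module, so one should phrase it via $\Tor^R_i(M,N)\cong\Tor^R_1(M,\syz^{i-1}N)$ and $\Ext^i_R(M,N)\cong\Ext^1_R(M,\syz^{i-1}\text{-cosyzygy or } \mho^{i-1}N)$, noting $\syz^{i-1}N\in\mod R$ so the degree-$i$ terms contribute nothing new to the intersection over all second arguments. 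Everything else is diagram chasing with the Auslander--Bridger six-term sequence and the transpose.
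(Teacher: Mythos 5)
Your proposal is built on two intermediate isomorphisms that are false in general, and the lemma is an equality of \emph{annihilators} precisely because these isomorphisms fail. From $0\to\syz M\to P\to M\to0$, your own long exact sequence identifies $\lend_R(M)$ with $\ker\bigl(\Ext_R^1(M,\syz M)\to\Ext_R^1(M,P)\bigr)$, i.e.\ with the image of the connecting map; the connecting map need not be surjective, and exactness at $\Ext_R^1(M,\syz M)$ does not make the map to $\Ext_R^1(M,P)$ zero. For instance, over $R=k[x,y]/(x^2,xy,y^2)$ with $M=k$ one has $\lend_R(k)\cong k$ while $\Ext_R^1(k,\syz k)\cong k^{4}$. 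So Step 1 only yields the inclusion $\ann_R\Ext_R^1(M,\syz M)\subseteq\ann_R\lend_R(M)$, not the equality. Step 3 is worse: the image of $\Hom_R(P,N)\to\Hom_R(\syz M,N)$ consists of the maps that extend over $\syz M\hookrightarrow P$, which is in general a \emph{proper} subset of the maps factoring through some projective (equality requires $\syz M\to P$ to be a left $\add R$-approximation; this is exactly why the paper's alternative proof of Lemma \ref{16} must use $\tr\syz\tr\syz M$ instead of $M$). Hence $\Ext_R^1(M,N)\cong\lhom_R(\syz M,N)$ is false: for a domain $R$ and $M=R/(x)$ with $x\ne0$ a nonunit, $\Ext_R^1(M,R)\cong R/(x)\ne0$ while $\lhom_R(\syz M,R)=\lhom_R(R,R)=0$. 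The same example refutes your final claim $\ann_R\lend_R(\syz M)=\ann_R\lend_R(M)$ (these ideals are $R$ and $(x)$; only the inclusion $\ann_R\lend_R(M)\subseteq\ann_R\lend_R(\syz M)$ holds), and it shows your Step 3 would output $\e^0(M,\mod R)=\ann_R\lend_R(\syz M)=R$, whereas the correct value is $(x)$.

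What is missing is the mechanism that actually drives the proof. If $b$ annihilates $\lend_R(M)$, then $b\cdot\id_M$ factors as $M\xrightarrow{f}Q\xrightarrow{g}M$ with $Q$ projective, and applying $\Tor_i^R(-,N)$ and $\Ext_R^i(-,N)$ for $i>0$ shows that $b$ annihilates every $\Tor_i^R(M,N)$ and $\Ext_R^i(M,N)$, since these vanish on $Q$; this gives $\ann_R\lend_R(M)\subseteq\t_0(M,\mod R)\cap\e^0(M,\mod R)$, a step nowhere in your proposal (and it subsumes all the $i>1$ bookkeeping you worry about). For $\t_0(M,\mod R)\subseteq\ann_R\lend_R(M)$ you name the right ingredient but do not use it correctly: the needed specialization is $\lend_R(M)\cong\Tor_1^R(M,\tr M)$ (Yoshino, plus symmetry of Tor), and since $\tr M\in\mod R$ this module occurs in the defining intersection; your ``universal choice $N=\syz M$'' is irrelevant, as $\Tor_1^R(M,\syz M)$ has no relation to $\lend_R(M)$. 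Finally, $\e^0(M,\mod R)\subseteq\ann_R\Ext_R^1(M,\syz M)$ is trivial, and $\ann_R\Ext_R^1(M,\syz M)\subseteq\ann_R\lend_R(M)$ is the one inclusion your Step 1 genuinely proves (the paper gets it from the proof of \cite[Lemma 2.14]{ua}: if $a$ kills $\Ext_R^1(M,\syz M)$, it kills the class of $0\to\syz M\to P\to M\to0$, so $a\cdot\id_M$ factors through a projective). Chaining these four inclusions cyclically gives the lemma; as written, your argument establishes only that single inclusion.
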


\begin{proof}
We call the four ideals (1), (2), (3) and (4) in order.
Clearly, (3) contains (4).
Let $a$ be an element of (3).
Then the multiplication map $M\xrightarrow{a}M$ factors through a projective module by the proof of \cite[Lemma 2.14]{ua}.
Hence $M\xrightarrow{a}M$ is zero in $\lend_R(M)$, and so is the composition of any endomorphism of $M$ with $M\xrightarrow{a}M$.
Thus (2) contains (3).
Let $b$ be an element of (2).
Then $b\cdot\id_M$ is zero in $\lend_R(M)$, which means that the multiplication map $M\xrightarrow{b}M$ factors through a projective module.
There is a diagram $M\xrightarrow{f}P\xrightarrow{g}M$ of homomorphisms of $R$-modules with $P$ projective such that $gf=(M\xrightarrow{b}M)$.
Applying $\Tor^R_i(-,N)$ and $\Ext_R^i(-,N)$ with $i>0$ and $N\in\mod R$, we see that the multiplication maps $\Tor^R_i(M,N)\xrightarrow{b}\Tor^R_i(M,N)$ and $\Ext_R^i(M,N)\xrightarrow{b}\Ext_R^i(M,N)$ are zero as $\Tor^R_i(P,N)=\Ext_R^i(P,N)=0$.
Thus (1) and (4) contain (2).
There is an isomorphism $\lend_R(M)\cong\Tor_1^R(M,\tr M)$ by \cite[Lemma (3.9)]{Y}, from which we see that (2) contains (1).
\end{proof} 

Now we obtain inclusions among annhilators of Tor and Ext.

\begin{prop}\label{7}
Let $\X,\Y$ be subcategories of $\mod R$.
Assume that $\X$ is closed under syzygies and that $\Y$ contains $\X$.
Then for any integer $n\ge0$ one has
\begin{align*}
\e^n(\X)
&\textstyle=\bigcap_{X\in\X}\ann_R\Ext_R^{n+1}(X,\syz^{n+1}X)\\
&\textstyle=\bigcap_{X\in\X}\ann_R\lend_R(\syz^nX)
=\e^n(\X,\Y)\subseteq\t_n(\X,\Y)\subseteq\t_n(\X).
\end{align*}
If moreover $\Y$ contains $\tr\X$, then one has the equality $\t_n(\X,\Y)=\e^n(\X,\Y)$.
So in particular, if $\X$ is closed under syzygies and transposes, then $\e^n(\X)=\t_n(\X)$ holds.
\end{prop}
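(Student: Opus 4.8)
The plan is to exploit the reduction $\e^n(\X,\Y)=\e^0(\syz^n\X,\Y)$ and $\t_n(\X,\Y)=\t_0(\syz^n\X,\Y)$, valid since $\X$ is closed under syzygies (so that the intersection over $i>n$ collapses to the single degree $i=n+1$, and a dimension shift moves $\syz^n$ onto the first argument). This replaces the general $n$ by $n=0$, at the cost of replacing $\X$ by $\syz^n\X$; but $\syz^n\X$ is again closed under syzygies, so no generality is lost. Hence it suffices to prove the chain for $n=0$.

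For the $n=0$ case, first I would establish the displayed identities for $\e^0$. Since $\Y\supseteq\X$, monotonicity of the intersection gives $\e^0(\X,\Y)\subseteq\e^0(\X,\X)=\e^0(\X)$; the reverse inclusion is the content: for each $X\in\X$ one has $\e^0(X,\Y)=\ann_R\lend_R(X)=\e^0(X,\mod R)$ by Lemma \ref{1} (applied with $M=X$), because Lemma \ref{1} already identifies $\e^0(X,\mod R)$ with $\ann_R\lend_R(X)$ and in particular with $\ann_R\Ext_R^1(X,\syz X)$, and $\syz X\in\mod R$ while also $\syz X\in\X\subseteq\Y$ since $\X$ is closed under syzygies. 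Intersecting over $X\in\X$ yields $\e^0(\X)=\bigcap_{X\in\X}\ann_R\lend_R(X)=\bigcap_{X\in\X}\ann_R\Ext_R^1(X,\syz X)=\e^0(\X,\Y)=\e^0(\X,\mod R)$. The inclusion $\e^0(\X,\Y)\subseteq\t_0(\X,\Y)$ then also comes from Lemma \ref{1}: for fixed $X$, $\e^0(X,\mod R)=\t_0(X,\mod R)\subseteq\t_0(X,\Y)$, and intersecting over $X\in\X$ gives $\e^0(\X,\mod R)\subseteq\t_0(\X,\Y)$; finally $\t_0(\X,\Y)\subseteq\t_0(\X,\X)=\t_0(\X)$ again by $\Y\supseteq\X$ and monotonicity. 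Unwinding the $n$-shift turns these into the asserted equalities and inclusions $\e^n(\X)=\bigcap_X\ann_R\Ext_R^{n+1}(X,\syz^{n+1}X)=\bigcap_X\ann_R\lend_R(\syz^nX)=\e^n(\X,\Y)\subseteq\t_n(\X,\Y)\subseteq\t_n(\X)$.

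For the last assertions: if in addition $\Y\supseteq\tr\X$, I want $\t_n(\X,\Y)\subseteq\e^n(\X,\Y)$, which combined with the inclusion above yields equality. After the $n$-shift it suffices to show $\t_0(\X,\Y)\subseteq\e^0(\X,\Y)=\bigcap_{X}\ann_R\lend_R(X)$. By Lemma \ref{1}, $\lend_R(X)\cong\Tor_1^R(X,\tr X)$, so $\ann_R\lend_R(X)=\ann_R\Tor_1^R(X,\tr X)\supseteq\t_0(X,\{\tr X\})\supseteq\t_0(X,\Y)$ because $\tr X\in\tr\X\subseteq\Y$; hence $\ann_R\lend_R(X)\supseteq\t_0(\X,\Y)$ for every $X\in\X$, and intersecting over $X$ gives $\e^0(\X,\Y)\supseteq\t_0(\X,\Y)$, as wanted. (One subtlety: $\tr X$ is only defined up to projective summands, but $\Tor_1^R(X,-)$ kills projectives, so $\ann_R\Tor_1^R(X,\tr X)$ is well defined; similarly $\tr\X$ was defined in the preliminaries as a stable subcategory, which is why $\Y\supseteq\tr\X$ is the right hypothesis.) Specializing to $\Y=\X$ when $\X$ is closed under both syzygies and transposes gives $\tr\X\subseteq\X=\Y$, hence $\e^n(\X)=\t_n(\X)$.

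The only mild obstacle is bookkeeping: making sure the dimension-shift isomorphisms and the "up to projective summands" ambiguities in $\syz$ and $\tr$ do not affect the annihilator ideals (they do not, since $\Tor_{\ge1}$ and $\Ext_{\ge1}$ vanish on projectives and annihilators are unchanged by adding projective summands), and verifying that each invocation of Lemma \ref{1} is with an argument that genuinely lies in $\Y$ — which is exactly guaranteed by "$\X$ closed under syzygies" (for $\syz^{n+1}X$) and "$\Y\supseteq\tr\X$" (for $\tr$ of the shifted module). Everything else is monotonicity of intersections and the previously proved Lemma \ref{1} and \cite[Lemma (3.9)]{Y}.
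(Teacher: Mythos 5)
Your proof is correct and follows essentially the same route as the paper: everything is funneled through Lemma \ref{1}, identifying all the ideals with $\bigcap_{X\in\X}\ann_R\lend_R(\syz^nX)$, and the final equality uses $\lend_R(\syz^nX)\cong\Tor_{n+1}^R(X,\tr\syz^nX)$ together with $\tr\syz^nX\in\Y$, exactly as in the paper's argument via \cite[Lemma (3.9)]{Y}. The only cosmetic difference is that you first shift to the case $n=0$ by replacing $\X$ with $\syz^n\X$ and then unwind, whereas the paper argues directly at level $n$; the bookkeeping you flag (projective-summand ambiguities, membership of the shifted modules in $\Y$) is handled correctly.
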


\begin{proof}
As $\Y$ contains $\X$, the ideals $\e^n(\X)$ and $\t_n(\X)$ contain $\e^n(\X,\Y)$ and $\t_n(\X,\Y)$, respectively.
Since $\X$ is closed under syzygies, we have an equality $\e^n(\X)=\bigcap_{X,X'\in\X}\ann_R\Ext_R^{n+1}(X,X')$, the right-hand side of which is contained in $I:=\bigcap_{X\in\X}\ann_R\Ext_R^{n+1}(X,\syz^{n+1}X)$ as $\syz^{n+1}X\in\X$.
Lemma \ref{1} implies
\begin{align*}
I&\textstyle=\bigcap_{X\in\X}\ann_R\lend_R(\syz^nX)
=\bigcap_{X\in\X}\e^0(\syz^nX,\mod R)
=\e^n(\X,\mod R)\subseteq\e^n(\X,\Y),\\
I&\textstyle=\bigcap_{X\in\X}\ann_R\lend_R(\syz^nX)\subseteq\bigcap_{X\in\X}\t_0(\syz^nX,\mod R)=\t_n(\X,\mod R)\subseteq\t_n(\X,\Y).
\end{align*}
Now the proof of the first assertion of the proposition is completed.

Next we show the last assertion of the proposition.
We already know that $\t_n(\X,\Y)$ contains $\e^n(\X,\Y)$.
Let $a\in\t_n(\X,\Y)$ and $X\in\X$.
The assumption implies $\syz^nX\in\X$ and $\tr\syz^nX\in\Y$.
The isomorphisms
$$
\lend_R(\syz^nX)
\cong\Tor_1^R(\tr\syz^nX,\syz^nX)
\cong\Tor_1^R(\syz^nX,\tr\syz^nX)
\cong\Tor_{n+1}^R(X,\tr\syz^nX)
$$
hold, where the first isomorphism follows from \cite[Lemma (3.9)]{Y}.
The last term $\Tor_{n+1}^R(X,\tr\syz^nX)$ is annihilated by the element $a$, and so is the first term $\lend_R(\syz^nX)$.
It follows that $\t_n(\X,\Y)$ is contained in $\bigcap_{X\in\X}\ann_R\lend_R(\syz^nX)=\e^n(\X,\Y)$.
\end{proof} 

\begin{rem}
Proposition \ref{7} also deduces Corollary \ref{13} for those $\C$ which contain $\G_t(R)$.
\end{rem}

To derive our next annihilator relations, we need a lemma, generalizing \cite[Propositions 4.5 and 4.6(1)]{dim}.
Assertions (1) and (2) are shown similarly as in the proof of \cite[Proposition 4.5]{dim}, while (3) is deduced analogously as in the proof of \cite[Proposition 4.6(1)]{dim} by using (1) and (2) instead of \cite[Proposition 4.5]{dim}.

\begin{lem}\label{3.11}
Let $R$ be a $d$-dimensional Cohen--Macaulay local ring.
Let $n\ge 0$ be an integer.  
\begin{enumerate}[\rm(1)]
\item
Let $a\in \t_n(\cm_0(R))$.
Then $a^{2^{2d}}\Tor_i^R(M,N)=0$ for all $i>n+4d$ and $M,N\in \mod R$.
\item
Let $a\in \e^n(\cm_0(R))$.
Then $a^{2^{2d}(d+1)}\Ext^i_R(M,N)=0$ for all $i>n+d$ and $M,N\in \mod R$.
\item
It holds that $\sing R\subseteq \V(\t_{n+4d}(\mod R))\cap \V(\e^{n+d}(\mod R))\subseteq \V(\t_n(\cm_0(R))) \cap \V(\e^n(\cm_0(R)))$.
\end{enumerate}
\end{lem}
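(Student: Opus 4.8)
The plan is to prove the three parts in order, following the cited template from \cite{dim}. For part (1), I would start from an element $a\in\t_n(\cm_0(R))$ and an arbitrary pair $M,N\in\mod R$. The standard device is to replace $M$ and $N$ by high enough syzygies: since $R$ is $d$-dimensional Cohen--Macaulay, $\syz^d M$ and $\syz^d N$ are maximal Cohen--Macaulay, and for any prime $\p$ of height $<d$ (i.e. not maximal) one can further argue, using induction on $\dim R_\p$, that the obstruction to being locally free is killed by a controlled power of $a$. Concretely, I would take an exact sequence realizing the cosyzygy/syzygy and push-pull to reduce to modules in $\cm_0(R)$, at the cost of replacing $a$ by $a^2$ each time the dimension drops; doing this $2d$ times (once for each of the $d$ possible codimensions, for each of the two modules) accounts for the exponent $2^{2d}$, and shifting the homological degree by $d$ for each of $M$ and $N$ to reach maximal Cohen--Macaulayness — plus an additional shift coming from the cosyzygy construction — accounts for the shift $i>n+4d$. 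The key input is that $\t_n(\cm_0(R))$ annihilates $\Tor_i^R(X,Y)$ for $X,Y\in\cm_0(R)$ and all $i>n$, combined with Remark on $\syz$-closedness reducing to a single degree.

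For part (2) the argument is parallel but with $\Ext$ in place of $\Tor$: starting from $a\in\e^n(\cm_0(R))$, I would again reduce $M$ to a maximal Cohen--Macaulay module by passing to $\syz^d M$ (shifting the degree by $d$, hence $i>n+d$), and then handle the second variable $N$ by dévissage on a filtration whose quotients are, after localization, free in codimension $<d$. Here the extra factor $d+1$ in the exponent $2^{2d}(d+1)$ presumably arises because one filters $N$ (or a suitable syzygy) into at most $d+1$ pieces — one for each possible dimension of support from $0$ to $d$ — and each piece contributes a factor of $a^{2^{2d}}$; the $2^{2d}$ itself comes, as in (1), from iterating the codimension-reduction $2d$ times. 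I would use Lemma \ref{1} and Proposition \ref{7} to identify $\e^n(\cm_0(R))$ with annihilators of $\lend_R$ of syzygies, which makes the localization behaviour transparent.

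For part (3), the first inclusion $\sing R\subseteq\V(\t_{n+4d}(\mod R))\cap\V(\e^{n+d}(\mod R))$ is the easy direction: if $\p\in\sing R$ then $R_\p$ is not regular, so there exist $R_\p$-modules with nonvanishing $\Tor_i^{R_\p}$ (resp. $\Ext^i_{R_\p}$) in arbitrarily large degrees; globalizing, $\t_{n+4d}(\mod R)$ (resp. $\e^{n+d}(\mod R)$) localizes into the maximal ideal of $R_\p$, hence is contained in $\p$. The second inclusion $\V(\t_{n+4d}(\mod R))\cap\V(\e^{n+d}(\mod R))\subseteq\V(\t_n(\cm_0(R)))\cap\V(\e^n(\cm_0(R)))$ is exactly where parts (1) and (2) are used: (1) says every element of $\t_n(\cm_0(R))$ has a power lying in $\t_{n+4d}(\mod R)$ — more precisely $a^{2^{2d}}$ kills all $\Tor_i$ with $i>n+4d$, so $\t_n(\cm_0(R))\subseteq\sqrt{\t_{n+4d}(\mod R)}$, giving $\V(\t_{n+4d}(\mod R))\subseteq\V(\t_n(\cm_0(R)))$ — and symmetrically (2) gives the $\Ext$ half. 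Intersecting yields the claim.

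The main obstacle I anticipate is getting the bookkeeping of exponents and homological shifts exactly right in parts (1) and (2) — in particular justifying that each codimension-reduction step really only squares $a$ (rather than incurring a worse blow-up) and that the cosyzygy-into-$\add\omega$ maneuver used to land in $\cm_0(R)$ contributes precisely the stated shift. The conceptual content is routine once one trusts the \cite{dim} template, but I would need to be careful that the generalization from $n=0$ to arbitrary $n$ does not disturb the constants, which it should not, since all syzygy/cosyzygy shifts are additive in the homological degree and independent of $n$.
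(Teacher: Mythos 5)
Your treatment of part (3) is correct and is exactly what the paper intends: the first inclusion follows by localizing at $\p\in\sing R$ and testing against $R/\p$, since $R_\p$ non-regular forces $\Tor_i^{R_\p}(\kappa(\p),\kappa(\p))\ne0$ and $\Ext^i_{R_\p}(\kappa(\p),\kappa(\p))\ne0$ for all $i$, and the second inclusion is immediate from (1) and (2), which say precisely that $\t_n(\cm_0(R))\subseteq\sqrt{\t_{n+4d}(\mod R)}$ and $\e^n(\cm_0(R))\subseteq\sqrt{\e^{n+d}(\mod R)}$. Your overall strategy for (1) and (2) -- rerun the arguments of \cite{dim} (Propositions 4.5 and 4.6(1)) with all homological degrees shifted by $n$ -- is also the paper's own proof, which consists of exactly this citation.

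However, your reconstruction of that template has genuine gaps. For (1), the entire content is the dévissage from $\cm_0(R)$ to $\mod R$, and you only assert it: the claim that ``each time the dimension drops one may replace $a$ by $a^2$'' is the statement that needs proof, and it rests on the specific construction in the proof of \cite{dim} which, for a maximal Cohen--Macaulay module with positive-dimensional nonfree locus, produces explicit exact sequences whose other terms have strictly smaller nonfree locus at the cost of a syzygy shift; both the squaring of $a$ and the degree bookkeeping (the passage from $2d$ to $4d$) are read off from those sequences, and none of this is supplied. Your appeal to ``the cosyzygy construction'' cannot substitute for it: Lemma \ref{3.11} does not assume a canonical module, so $\add\omega$-cosyzygies are not available, and in any case cosyzygy sequences do not shift Tor degrees the way syzygy sequences do, so they cannot account for the extra shift. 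For (2), the proposed filtration of $N$ into $d+1$ pieces by dimension of support is unjustified: the subquotients of such a filtration are in general neither maximal Cohen--Macaulay nor in $\cm_0(R)$, so the hypothesis $a\in\e^n(\cm_0(R))$ gives no control over them; note moreover that by Proposition \ref{7} one already has $\e^n(\cm_0(R))=\e^n(\cm_0(R),\mod R)$, so the second variable is not where the difficulty lies, and a factor of the form $t+1$ typically arises (as in Lemma \ref{2}(1)) from a filtration of length $t+1$ on a duality spectral-sequence abutment rather than from a support filtration of $N$. So while ``imitate \cite{dim} and check that the shift by $n$ is harmless'' is the right (and intended) approach, your sketch would not become a proof without importing the key reduction lemma from \cite{dim} and redoing the exponent and degree estimates from it, which is precisely the part you leave open.
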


Now we can prove the following theorem, which gives a $\Tor$ version of \cite[Proposition 4.8]{dim}, and contains part of Theorem \ref{14}(2).  

\begin{thm}\label{3.12}
Let $(R,\m,k)$ be a equicharacteristic, local Cohen--Macaulay ring of dimension $d$.
Let $n\ge 0$ be an integer.
Let $\cm_0(R)\subseteq \X \subseteq \Y$ be subcategories of $\mod R$, where $\X$ is closed under syzygies.
Then $\sing R=\V(\t_n(\X,\Y))=\V(\e^n(\X,\Y))$, if one of the following three conditions is satisfied.
\begin{align*}
\text{{\rm(1)} $R$ is excellent and $n\ge 2d$.}\quad
&\text{{\rm(2)} $R$ is complete, $k$ is perfect and $n\ge d$.}\\
&\text{{\rm(3)} $R$ is complete, $k$ is perfect and $\X \subseteq \cm(R)$.}
\end{align*}
\end{thm}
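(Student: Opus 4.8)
The plan is to reduce everything to Lemma~\ref{3.11} together with Proposition~\ref{7}, so that the only genuinely new input is a way of passing from ``$\sqrt{\t_n(\cm_0(R))}=\sqrt{\e^n(\cm_0(R))}=\sqrt{\sing R}$ up to a shift in homological degree'' to the analogous statement with the fixed index $n$ and with $\cm_0(R)$ replaced by the larger category $\X$. First I would record the chain of inclusions that holds in all three cases. Since $\cm_0(R)\subseteq\X\subseteq\Y$ and $\X$ is closed under syzygies, Proposition~\ref{7} gives $\e^n(\X,\Y)=\e^n(\X)\subseteq\t_n(\X)$ and the analogous inclusions; moreover because $\cm_0(R)\subseteq\X$ we have $\t_n(\X,\Y)\subseteq\t_n(\cm_0(R),\Y)$, and since $\cm_0(R)$ is closed under syzygies and transposes when $R$ is Cohen--Macaulay, Proposition~\ref{19} (or Corollary~\ref{13} in the Gorenstein case, Proposition~\ref{7} in general) yields $\t_n(\cm_0(R),\Y)=\e^n(\cm_0(R),\Y)=\e^n(\cm_0(R))=\t_n(\cm_0(R))$. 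Hence
$$
\t_n(\X,\Y)\ \subseteq\ \t_n(\cm_0(R))=\e^n(\cm_0(R)),\qquad
\e^n(\X,\Y)=\e^n(\X)\ \subseteq\ \t_n(\X)\ \subseteq\ \t_n(\cm_0(R)).
$$
Consequently $\V(\t_n(\cm_0(R)))\subseteq\V(\t_n(\X,\Y))$ and $\V(\t_n(\cm_0(R)))\subseteq\V(\e^n(\X,\Y))$, and by Lemma~\ref{3.11}(3), $\sing R\subseteq\V(\t_n(\cm_0(R)))$. So in every case we get one inclusion for free:
$$
\sing R\ \subseteq\ \V(\t_n(\X,\Y))\cap\V(\e^n(\X,\Y)).
$$

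For the reverse inclusion I would argue that both $\t_n(\X,\Y)$ and $\e^n(\X,\Y)$ contain a power of some ideal whose vanishing locus is $\sing R$; since $R$ is Cohen--Macaulay, outside $\sing R$ the relevant Tor and Ext modules vanish for $i>n$, so it is clear that $\V(\t_n(\X,\Y))\cup\V(\e^n(\X,\Y))\subseteq\sing R$ once we know these ideals are nonzero away from the singular locus, i.e.\ the real content is that $\sqrt{\t_n(\X,\Y)}$ and $\sqrt{\e^n(\X,\Y)}$ contain $\sqrt{\sing R}$. For this I would invoke the equicharacteristic hypothesis: in cases (1) and (2) one knows (this is exactly how \cite[Proposition 4.8]{dim} and Lemma~\ref{3.11}(2) are used) that $\e^n(\mod R)$, and by the degree-shift bounds of Lemma~\ref{3.11}(1),(2) also $\t_{n}(\mod R)$ after the appropriate shift, defines the singular locus; the hypotheses $n\ge 2d$ (excellent) resp.\ $n\ge d$ (complete, $k$ perfect) are precisely what make the shifts in Lemma~\ref{3.11}(1),(2) harmless, so that $\t_n(\mod R)$ and $\e^n(\mod R)$ themselves have vanishing locus contained in $\sing R$. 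Since $\mod R\supseteq\Y\supseteq\X$, we have $\t_n(\mod R)\subseteq\t_n(\X,\Y)$ and likewise $\e^n(\mod R)\subseteq\e^n(\X,\Y)$ — wait, that inclusion goes the wrong way, so instead I would use that $\t_n(\X,\Y)$ and $\e^n(\X,\Y)$ themselves localize correctly: for $\p\notin\sing R$, $R_\p$ is regular, every module is of finite projective dimension, so $\Tor_i^{R_\p}=\Ext^i_{R_\p}=0$ for $i>d\le n$ (in cases (1),(2)) or $i\gg0$ in general, giving $\t_n(\X,\Y)_\p=\e^n(\X,\Y)_\p=R_\p$; hence $\V(\t_n(\X,\Y))\cup\V(\e^n(\X,\Y))\subseteq\sing R$ directly, and no degree shift is needed for this direction.

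For case (3), where we only assume $\X\subseteq\cm(R)$ with no lower bound on $n$, the degree shift genuinely matters and this is the step I expect to be the main obstacle. Here I would use that for maximal Cohen--Macaulay modules $X$ over a Cohen--Macaulay local ring, $\syz X$ and the transposes stay inside $\cm(R)$, and that by Proposition~\ref{7}, $\e^n(\X)=\bigcap_{X\in\X}\ann_R\lend_R(\syz^n X)$; using that $\syz^n X$ is again MCM and locally free in codimension $0$ is automatic only if $X\in\cm_0(R)$, so one cannot immediately replace $\X$ by $\cm_0(R)$ in the other direction. The fix is to note that Lemma~\ref{3.11}(1),(2) are stated for \emph{all} $M,N\in\mod R$ with an explicit power, so starting from $a\in\t_n(\cm_0(R))=\e^n(\cm_0(R))$ — which contains $\e^n(\X,\Y)$ as shown above — we get a uniform power $a^N$ killing all higher Tor and Ext; but we want the opposite containment. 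So instead I would run Lemma~\ref{3.11} with the MCM hypothesis built in: over a complete CM local ring with perfect residue field, $\cm(R)$ has, after finitely many syzygies, a resolving-type generator, and \cite[Proposition 4.8]{dim} / the proof of Lemma~\ref{3.11}(3) shows $\sing R=\V(\e^{n}(\cm(R)))$ already for $n=0$; combining $\e^0(\cm(R))\subseteq\e^n(\X)=\e^n(\X,\Y)$ (from $\X\subseteq\cm(R)$ and $\e^0(\cm(R))\subseteq\e^n(\cm(R))$) with the free direction above closes the argument. In all three cases, assembling the two inclusions gives $\sing R=\V(\t_n(\X,\Y))=\V(\e^n(\X,\Y))$, which is the assertion.
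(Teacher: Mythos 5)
Your first inclusion, $\sing R\subseteq\V(\t_n(\X,\Y))\subseteq\V(\e^n(\X,\Y))$, is fine (it only needs $\t_n(\X,\Y)\subseteq\t_n(\cm_0(R))$, Lemma \ref{3.11}(3) and Proposition \ref{7}), and your treatment of case (3) is essentially the paper's: $\e^0(\cm(R))\subseteq\e^n(\X)=\e^n(\X,\Y)$ and then the result of Dao--Takahashi (\cite[Proposition 4.8]{dim}) giving $\V(\e^0)=\sing R$. But your argument for the reverse inclusion in cases (1) and (2) has a genuine gap. The step ``for $\p\notin\sing R$ every $\Tor_i$ and $\Ext^i$ with $i>n$ vanishes after localizing at $\p$, hence $\t_n(\X,\Y)_\p=\e^n(\X,\Y)_\p=R_\p$'' is invalid: $\t_n(\X,\Y)$ and $\e^n(\X,\Y)$ are intersections of annihilators over \emph{infinitely many} triples $(i,M,N)$, and localization does not commute with infinite intersections. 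Local vanishing of each module only yields $\supp_R\ttt_n(\X,\Y)\cup\supp_R\eee^n(\X,\Y)\subseteq\sing R$, which is strictly weaker than $\V(\t_n(\X,\Y))\subseteq\sing R$; indeed the paper's Remark \ref{10}(3) exhibits strict inclusions $\supp_R(\eee^n(\X,\Y))\subsetneq\V(\e^n(\X,\Y))$ precisely in this situation. Your conclusion that ``no degree shift is needed for this direction'' would make the hypotheses in (1) and (2) superfluous and prove $\V(\e^n(\X,\Y))\subseteq\sing R$ for every Cohen--Macaulay local ring and every $n$, which is not true (for instance, $\sing R$ need not even be Zariski-closed without excellence). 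What is really needed here is a \emph{uniform} annihilator, and that is exactly what the omitted hypotheses buy.

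Moreover, the inclusion you dismissed as ``going the wrong way'' in fact goes the right way and is the intended argument: since $\X,\Y\subseteq\mod R$ and $n\ge 2d$ (resp.\ $n\ge d$), one has $\e^{2d}(\mod R)\subseteq\e^n(\X,\Y)$ (resp.\ $\e^{d}(\mod R)\subseteq\e^n(\X,\Y)$), hence $\V(\e^n(\X,\Y))\subseteq\V(\e^{2d}(\mod R))$, which equals $\sing R$ for excellent equicharacteristic rings by the Iyengar--Takahashi theorem \cite[Theorem 5.3]{ua}, resp.\ $\V(\e^n(\X,\Y))\subseteq\V(\e^{d}(\mod R))\subseteq\sing R$ by Wang's theorem \cite[Corollary 5.15]{W} in the complete case with perfect residue field; combined with the easy direction this squeezes $\V(\t_n(\X,\Y))$ as well. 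A secondary slip: $\cm_0(R)$ is \emph{not} closed under transposes in general (the transpose of a maximal Cohen--Macaulay module need not be maximal Cohen--Macaulay unless $R$ is Gorenstein), so Proposition \ref{19} does not give $\t_n(\cm_0(R),\Y)=\e^n(\cm_0(R),\Y)$; fortunately your easy direction never actually needs that equality, only the stated inclusions.
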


\begin{proof}
Applying Lemma \ref{3.11}(3) and Proposition \ref{7} gives rise to inclusions $\sing R \subseteq \V(\t_n(\cm_0(R)))\subseteq \V(\t_n(\X,\Y))\subseteq \V(\e^n(\X,\Y))$.
It remains to show that $\V(\e^n(\X,\Y))$ is contained in $\sing R$.

(1) $\V(\e^n(\X,\Y))$ is contained in $\V(\e^{2d}(\mod R))$, which is equal to $\sing R$ by \cite[Theorem 5.3]{ua}.

(2) $\V(\e^n(\X,\Y))$ is contained in $\V(\e^{d}(\mod R))$, which is contained in  $\sing R$ by \cite[Corollary 5.15]{W}.   

(3) $\V(\e^n(\X,\Y))$ is contained in $\V(\e^0(\X,\Y))$, which is equal to $\sing R$ by \cite[Proposition 4.8]{dim}.
\end{proof} 

\section{Annihilators over a Cohen--Macaulay local ring}

In this section, we consider annihilators of Tor and Ext modules over a Cohen--Macaulay local ring.

Let $M$ be an $R$-module.
The {\em trace ideal} of $M$, denoted $\trace M$, is defined by the image of the canonical map $\Hom_R(M,R)\otimes_RM\to R$ given by $f\otimes x\mapsto f(x)$.
To prove the proposition below, we establish a lemma.

\begin{lem}\label{2}
Let $R$ be a $d$-dimensional Cohen--Macaulay local ring with a canonical module $\omega$.
Let $M,N$ be $R$-modules.
Let $i\ge1$ be an integer. 
Suppose that $N$ is maximal Cohen--Macaulay.

\begin{enumerate}[\rm(1)]
\item
Let $t$ be an integer with $\dim\nf(M)\le t\le d$.
Then one has
\begin{align*}
\textstyle\prod_{j=0}^t\ann_R\Ext_R^{d-j}(\Tor_{i+j}^R(M,N),\omega)&\subseteq\ann_R\Ext_R^{d+i}(M,N^\dag),\\
\textstyle\prod_{j=0}^t\ann_R\Ext_R^{d-j}(\Ext_R^{d+i-j}(M,N),\omega)&\subseteq\ann_R\Tor_i^R(M,N^\dag).
\end{align*} 
If $t=0$, then $\ann_R\Tor_i^R(M,N)=\ann_R\Ext_R^{d+i}(M,N^\dag)$.
\item
For any integer $r\ge0$, one has
$$
(\trace\omega)^r\cdot\ann_R\Ext_R^{r+i}(N,\syz^rM)\subseteq \ann_R\Ext_R^i(N,M)\supseteq(\trace\omega)^r\cdot\ann_R\Ext_R^{r+i}(\cos^rN,M).
$$
\end{enumerate}
\end{lem}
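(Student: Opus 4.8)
The statement decomposes into two parts, which I would handle separately.

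For part (2), the plan is to use a standard trace-ideal dévissage. Since $\omega$ is a maximal Cohen--Macaulay module, there is a short exact sequence $0\to K\to \omega^{\oplus m}\to R\to 0$ whose image in $R$ is exactly $\trace\omega$ times... more precisely, I would recall (or prove directly) that for any $R$-module $L$ there is an exact sequence $0\to L'\to \omega^{\oplus m}\to L''\to 0$ relating $L$ to modules built from $\omega$, with the "error" measured by $\trace\omega$. Concretely, for the right-hand inclusion, one uses a presentation-type exact sequence $0\to N\to \omega^{\oplus m}\to \cos N\to 0$ (the defining cosyzygy sequence) and applies $\Ext_R^i(-,M)$ to get a long exact sequence; the connecting map lets one compare $\Ext_R^i(N,M)$ with $\Ext_R^{i-1}(\cos N,M)=\Ext_R^{i-1}(\cos^1 N,M)$, up to a factor of $\trace\omega$ coming from the $\Ext_R^i(\omega^{\oplus m},M)$ term (which is annihilated by $\trace\omega$, as any map $\omega\to R$ factors the relevant piece). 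Iterating $r$ times gives the factor $(\trace\omega)^r$ and shifts the index to $r+i$. The left-hand inclusion is dual: use the syzygy sequence $0\to\syz M\to R^{\oplus m}\to M\to 0$, tensor or rather apply $\Ext_R^i(N,-)$, and note $\Ext_R^{i}(N, R^{\oplus m})$ need not vanish, so instead one should compare via an auxiliary sequence built from $\omega$ — here I would use that $N$ MCM means $\Ext^{>0}_R(N,\omega)=0$ is not quite what's needed; rather the factor $(\trace\omega)^r$ again absorbs the failure of $R$ to be a direct summand of a sum of copies of $\omega$.

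For part (1), the key tool is local duality together with the spectral sequence / filtration argument from \cite[Proposition 4.5]{dim}. The first step is the case $t=0$: when $\dim\nf(M)\le 0$, the module $M$ is free on the punctured spectrum, so $\Tor_i^R(M,N)$ has finite length for $i\ge 1$; then $\Ext_R^{d+i}(M,N^\dag)\cong \Ext_R^{d+i}(M,\Hom_R(N,\omega))$, and using that $N$ is MCM one gets $\rhom_R(N,\omega)\simeq N^\dag$ is again MCM, so by a dimension shift $\Ext_R^{d+i}(M,N^\dag)\cong \Ext_R^d(\syz^i M, N^\dag)$; local duality then identifies this (as a Matlis dual) with $\H^0_\m$ of $\Tor$, and since everything has finite length one gets the clean equality of annihilators $\ann_R\Tor_i^R(M,N)=\ann_R\Ext_R^{d+i}(M,N^\dag)$. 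For general $t$, I would induct on $t$: write $M$ via its syzygies and use the change-of-rings / composition-of-functors spectral sequence $\Ext_R^p(\Tor_q^R(M,N),\omega)\Rightarrow \Ext_R^{p+q}(M,N^\dag)$ (this is essentially $\rhom_R(M\ltensor_R N,\omega)\simeq \rhom_R(M,\rhom_R(N,\omega))=\rhom_R(M,N^\dag)$ since $N$ is MCM so $\rhom_R(N,\omega)$ is concentrated in degree $0$), read off that the $E_2$-terms $\Ext_R^{d-j}(\Tor_{i+j}^R(M,N),\omega)$ for $0\le j\le t$ are the only ones that can contribute to $\Ext_R^{d+i}_R(M,N^\dag)$ in the relevant range (the other $\Tor$'s vanish away from $\m$ because $\dim\nf(M)\le t$ forces $\Tor_{>?}$ to have small support, controlling the vanishing of the high $\Ext_R^{>d-?}$), and conclude that the product of their annihilators kills the abutment. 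The second inclusion in (1) is proved symmetrically, swapping the roles of Tor and Ext (using the other spectral sequence for $\rhom_R(\rhom_R(M,\omega'),\omega)$-type computations, or dualizing the first).

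\textbf{Main obstacle.} The delicate point is the bookkeeping in the general-$t$ case of part (1): one must verify that under the hypothesis $\dim\nf(M)\le t$, exactly the $E_2$-entries with $0\le j\le t$ survive into the relevant degree and that the filtration on the abutment has at most $t+1$ steps, so that the product $\prod_{j=0}^t$ of annihilators — rather than a single annihilator — is what is forced. This requires carefully tracking which $\Tor_{i+j}^R(M,N)$ are supported only at $\m$ (those with $j$ large, via depth/dimension counting using that $M_\p$ is free for $\p$ of coheight $>t$) versus which can have larger support, and correspondingly which $\Ext_R^{d-j}(-,\omega)$ vanish. I expect this to follow the pattern of \cite[Proposition 4.5]{dim} closely, but the presence of the second module $N$ (and its canonical dual) rather than just $\omega$ itself means the spectral-sequence input has to be set up with $\rhom_R(N,\omega)$ in place of $\omega$, and one must check this is still a maximal Cohen--Macaulay module concentrated in a single cohomological degree — which holds precisely because $N$ is assumed maximal Cohen--Macaulay, so that $\Ext_R^{>0}(N,\omega)=0$.
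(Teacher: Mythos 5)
Your plan is essentially the paper's own proof: part (1) is done there exactly via the two spectral sequences arising from $\rhom_R(M\ltensor_R N,\omega)\cong\rhom_R(M,N^\dag)$ and $\rhom_R(\rhom_R(M,N),\omega)\cong M\ltensor_R N^\dag$ (using that $N$ is maximal Cohen--Macaulay so $\rhom_R(N,\omega)\simeq N^\dag$), with $\E_2$-vanishing coming from $\operatorname{injdim}\omega=d$ together with local duality applied to modules supported in $\nf(M)$ of dimension at most $t$, so the filtration of the abutment has at most $t+1$ nonzero quotients and the product of the $t+1$ annihilators kills it, while the $t=0$ equality follows from the collapsed isomorphism plus Matlis duality; part (2) is the same d\'evissage along the sequences $0\to\syz^{j+1}M\to R^{\oplus m_j}\to\syz^jM\to0$ and $0\to\cos^jN\to\omega^{\oplus n_j}\to\cos^{j+1}N\to0$ with powers of $\trace\omega$. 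The one point to pin down in (2) is the trace-ideal input, namely that $\trace\omega$ annihilates $\Ext_R^{>0}(X,R)$ for $X\in\cm(R)$ and $\Ext_R^{>0}(\omega,-)$ (the paper quotes \cite[Theorem 2.3]{trace}); and, contrary to your aside, $\Ext_R^{>0}(N,\omega)=0$ is precisely what is needed for the syzygy half, since multiplication by an element of $\trace\omega$ on $R$ factors through a finite direct sum of copies of $\omega$, so it kills $\Ext_R^{l}(N,R)$ for $l>0$.
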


\begin{proof}
(1) We have an isomorphism $\rhom_R(M\otimes_R^{\L} N, \omega)\cong \rhom_R(M,\rhom_R(N,\omega))$; see \cite[(A.4.21)]{C}.
Since $N$ is maximal Cohen--Macaulay, we have $\Ext^{>0}_R(N,\omega)=0$, and $\rhom_R(N,\omega)\cong N^\dag$.
Hence $\rhom_R(M\otimes_R^{\L} N, \omega)\cong \rhom_R(M, N^{\dag})$.
This induces a spectral sequence
$$
\E_2^{pq}=\Ext_R^p(\Tor_q^R(M,N),\omega)\implies\H^{p+q}=\Ext_R^{p+q}(M,N^\dag).
$$
Clearly, $\E_2^{pq}=0$ if $p<0$ or $q<0$.
As $\omega$ has injective dimension $d$, we have that $\E_2^{pq}=0$ if $p>d$.
The support of $\Tor_q^R(M,N)$ is contained in $\nf(M)$ if $q>0$.
Local duality (\cite[Corollary 3.5.11(a)]{BH}) shows that $\E_2^{pq}=0$ if $q>0$ and $p<d-t$.
The filtration induced from the spectral sequence is
$$
\H^{d+i}=\cdots=\H^{d+i}_{d-t}\supseteq\H^{d+i}_{d-t+1}\supseteq\cdots\supseteq\H^{d+i}_d\supseteq\H^{d+i}_{d+1}=\cdots=0
$$
with $\H^{d+i}_{d-j}/\H^{d+i}_{d-j+1}=\E_\infty^{d-j,i+j}$ for each $0\le j\le t$.
In general, $\E_r^{pq}$ is a subquotient of $\E_{r-1}^{pq}$ for all $p,q,r$, and hence $\ann_R\E_\infty^{pq}$ contains $\ann_R\E_2^{pq}$ for all $p,q$.
The filtration shows that $\prod_{j=0}^t\ann_R\E_2^{d-j,i+j}\subseteq\prod_{j=0}^t\ann_R\E_\infty^{d-j,i+j}\subseteq\ann_R\H^{d+i}$.
The first inclusion in the assertion follows from this.
The second inclusion is deduced by a dual argument.
Namely, since $\omega$ has finite injective dimension, we have an isomorphism $ \rhom_R(\rhom_R(M, N), \omega)\cong M \otimes_R^{\L}\rhom_R(N,\omega)$; see \cite[(A.4.24)]{C}.
As $N$ is maximal Cohen--Macaulay, we get $\rhom_R(N,\omega)\cong N^\dag$ and $\rhom_R(\rhom_R(M, N), \omega)\cong M \otimes_R^{\L} N^{\dag}$.
This induces a spectral sequence 
$$
\E_2^{pq}=\Ext_R^p(\Ext_R^{-q}(M,N),\omega)\implies\H^{p+q}=\Tor^R_{-p-q}(M,N^\dag),
$$
and $\E_2^{pq}=0$ if (i) $p<0$, or (ii) $q>0$, or (iii) $p>d$, or (iv) $q<0$ and $p<d-t$. 
A filtration $\H^{-i}=\cdots=\H^{-i}_{d-t}\supseteq\H^{-i}_{d-t+1}\supseteq\cdots\supseteq\H^{-i}_d\supseteq\H^{-i}_{d+1}=\cdots=0$ is induced, where $\H^{-i}_{d-j}/\H^{-i}_{d-j+1}=\E_\infty^{d-j,-d-i+j}$ for each $0\le j\le t$.
This shows $\prod_{j=0}^t\ann_R\E_2^{d-j,-d-i+j}\subseteq\prod_{j=0}^t\ann_R\E_\infty^{d-j,-d-i+j}\subseteq\ann_R\H^{-i}$.

Now let $t=0$.
The first spectral sequence yields $\Ext_R^d(\Tor_i^R(M,N),\omega)\cong\Ext_R^{d+i}(M,N^\dag)$.
As $M$ is locally free on $\speco(R)$, the $R$-modules $\Tor_i^R(M,N),\,\Ext_R^{d+i}(M,N^\dag)$ have finite length.
By \cite[Corollary 3.5.9]{BH}, we get
$$
\Ext_R^{d+i}(M,N^\dag)^\vee\cong\Ext_R^d(\Tor_i^R(M,N),\omega)^\vee\cong\H_\m^0(\Tor_i^R(M,N))=\Tor_i^R(M,N).
$$
Hence $\ann_R\Tor_i^R(M,N)$ is equal to $\ann_R\Ext_R^{d+i}(M,N^\dag)^\vee$, which coincides with $\ann_R\Ext_R^{d+i}(M,N^\dag)$ by \cite[Proposition 3.2.12(c)]{BH}.

(2) For each $j\ge0$ there is an exact sequence $0\to\syz^{j+1}M\to R^{\oplus m_j}\to\syz^jM\to0$, which induces an exact sequence $\Ext_R^{i+j}(N,R)^{\oplus m_j}\to\Ext_R^{i+j}(N,\syz^jM)\to\Ext_R^{i+j+1}(N,\syz^{j+1}M)$.
By \cite[Theorem 2.3]{trace}, the ideal $\trace\omega$ annihilates $\Ext_R^l(X,R)$ for all $l>0$ and $X\in\cm(R)$.
It is observed that $(\trace\omega)\cdot\ann_R\Ext_R^{i+j+1}(N,\syz^{j+1}M)\subseteq\ann_R\Ext_R^{i+j}(N,\syz^jM)$, and hence
\begin{align*}
(\trace\omega)^r\cdot\ann_R\Ext_R^{r+i}(N,\syz^rM)
&\subseteq(\trace\omega)^{r-1}\cdot\ann_R\Ext_R^{r+i-1}(N,\syz^{r-1}M)\\
&\subseteq\cdots
\subseteq(\trace\omega)\cdot\ann_R\Ext_R^{i+1}(N,\syz M)
\subseteq\ann_R\Ext_R^i(N,M).
\end{align*}
It is also seen from \cite[Theorem 2.3]{trace} that $\trace\omega$ annihilates $\Ext_R^l(\omega,X)$ for all $l>0$ and $X\in\mod R$.
A dual argument using this and exact sequences $0\to \cos^j N\to \omega^{\oplus n_j}\to \cos^{j+1}N\to 0$ shows the inclusion $(\trace\omega)^r\cdot\ann_R\Ext_R^{r+i}(\cos^rN,M)\subseteq\ann_R\Ext_R^i(N,M)$.  
\end{proof}  

We can now prove the following proposition, which is an essential part of the theorem stated below.

\begin{prop}\label{6}
Let $R$ be a $d$-dimensional Cohen--Macaulay local ring with a canonical module $\omega$.
Let $\X,\Y$ be subcategories of $\mod R$ with $\Y\subseteq\cm(R)$.
Let $n\ge0$ be an integer.
\begin{enumerate}[\rm(1)]
\item
Let $0\le t\le d$ be an integer.
If $\X$ is contained in $\mod_{t}(R)$, then 
$$
(\t_n(\X,\Y))^{t+1}\subseteq\e^{d+n}(\X,\Y^\dag),\qquad
(\e^{d+n-t}(\X,\Y))^{t+1}\subseteq\t_n(\X,\Y^\dag),\qquad
$$
and the equalities hold when $t=0$.
\item
For each integer $r\ge0$ one has an inclusion
$$
(\trace\omega)^r\cdot\e^{r+n}(\Y,\syz^r\X)\subseteq \e^n(\Y,\X)\supseteq(\trace\omega)^r\cdot\e^{r+n}(\cos^r\Y,\X).
$$
\end{enumerate}
\end{prop}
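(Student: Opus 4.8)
The plan is to obtain both statements from the module-level inclusions of Lemma \ref{2} by intersecting them over all relevant homological degrees and over all $X\in\X$, $Y\in\Y$. Beyond Lemma \ref{2} the only fact used is the elementary one that if $a\in R$ annihilates an $R$-module $T$, then $a$ annihilates $\Ext_R^j(T,-)$, $\Ext_R^j(-,T)$ and $\Tor_j^R(T,-)$ as well; equivalently, $\ann_R T\subseteq\ann_R\Ext_R^j(T,\omega)$, $\ann_R T\subseteq\ann_R\Ext_R^j(N,T)$, and so on. The hypothesis $\Y\subseteq\cm(R)$ is precisely what makes Lemma \ref{2} applicable with $N$ taken in $\Y$, and also ensures $Y^\dag$ is maximal Cohen--Macaulay with $(Y^\dag)^\dag\cong Y$ and that the cosyzygies $\cos^rY$ exist.

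For (1), fix $X\in\X$, $Y\in\Y$ and an integer $i>n$ (so $i\ge1$); since $\X\subseteq\mod_t(R)$ we have $\dim\nf(X)\le t\le d$, so Lemma \ref{2}(1) applies to the pair $(X,Y)$. For each $0\le j\le t$ one has $i+j>n$, hence $\t_n(\X,\Y)\subseteq\ann_R\Tor_{i+j}^R(X,Y)\subseteq\ann_R\Ext_R^{d-j}(\Tor_{i+j}^R(X,Y),\omega)$; multiplying these $t+1$ containments and applying the first inclusion of Lemma \ref{2}(1) gives $(\t_n(\X,\Y))^{t+1}\subseteq\prod_{j=0}^t\ann_R\Ext_R^{d-j}(\Tor_{i+j}^R(X,Y),\omega)\subseteq\ann_R\Ext_R^{d+i}(X,Y^\dag)$, and intersecting over all $i>n$, $X$, $Y$ yields $(\t_n(\X,\Y))^{t+1}\subseteq\e^{d+n}(\X,\Y^\dag)$. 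The second inclusion is identical, using the second inclusion of Lemma \ref{2}(1) together with the observation that for $0\le j\le t$ and $i>n$ one has $d+i-j\ge d+n+1-t>d+n-t$, so $\e^{d+n-t}(\X,\Y)\subseteq\ann_R\Ext_R^{d+i-j}(X,Y)\subseteq\ann_R\Ext_R^{d-j}(\Ext_R^{d+i-j}(X,Y),\omega)$. When $t=0$, the last part of Lemma \ref{2}(1) upgrades these to equalities $\ann_R\Tor_i^R(X,Y)=\ann_R\Ext_R^{d+i}(X,Y^\dag)$ for all $X\in\X\subseteq\mod_0(R)$, $Y\in\Y$, $i\ge1$; intersecting gives $\t_n(\X,\Y)=\e^{d+n}(\X,\Y^\dag)$, and applying the same equality to the pairs $(X,Y^\dag)$ (legitimate since $Y^\dag\in\cm(R)$ and $(Y^\dag)^\dag\cong Y$) gives $\t_n(\X,\Y^\dag)=\e^{d+n}(\X,\Y)$, which is the $t=0$ case of the second inclusion read as an equality.

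For (2), fix $Y\in\Y$, $X\in\X$ and $i>n$. Since $\syz^rX\in\syz^r\X$ and $r+i>r+n$, we have $\e^{r+n}(\Y,\syz^r\X)\subseteq\ann_R\Ext_R^{r+i}(Y,\syz^rX)$, so the left-hand inclusion of Lemma \ref{2}(2) applied with $N=Y$, $M=X$ gives $(\trace\omega)^r\cdot\e^{r+n}(\Y,\syz^r\X)\subseteq(\trace\omega)^r\cdot\ann_R\Ext_R^{r+i}(Y,\syz^rX)\subseteq\ann_R\Ext_R^i(Y,X)$; intersecting over $i>n$, $Y\in\Y$, $X\in\X$ gives $(\trace\omega)^r\cdot\e^{r+n}(\Y,\syz^r\X)\subseteq\e^n(\Y,\X)$. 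The inclusion on the right of (2) is obtained in the same way from the right-hand inclusion of Lemma \ref{2}(2), using $\cos^rY\in\cos^r\Y$ for every $Y\in\Y$.

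Since Lemma \ref{2} does the real work, the proposition is essentially an assembly argument and there is no serious obstacle; the only thing needing care is the index bookkeeping --- checking that after the shifts by $d$, $t$ and $r$ the ideals $\t_n(\X,\Y)$, $\e^{d+n-t}(\X,\Y)$ and the relevant $\e^{r+n}$ really do lie inside every factor ideal produced by Lemma \ref{2} (this is where the inequalities such as $d+i-j>d+n-t$ for $0\le j\le t$, $i>n$, are used) --- together with remembering, in the $t=0$ case, to invoke the self-duality $(Y^\dag)^\dag\cong Y$ and $\Y^\dag\subseteq\cm(R)$ so as to obtain both equalities rather than only one.
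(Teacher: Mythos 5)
Your proposal is correct and follows essentially the same route as the paper: both derive the inclusions in (1) by taking one factor $a_j$ from the relevant ideal for each $0\le j\le t$, feeding the product through the two inclusions of Lemma \ref{2}(1) (with the same index checks $i+j>n$ and $d+i-j>d+n-t$), obtain the $t=0$ equalities from the last assertion of Lemma \ref{2}(1) plus the substitution $\Y\mapsto\Y^\dag$ using $(Y^\dag)^\dag\cong Y$, and deduce (2) directly from Lemma \ref{2}(2). No gaps.
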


\begin{proof}
(1) Fix $i>n$, $0\le j\le t$, $M\in\X$ and $N\in\Y$.
Note then that $\dim\nf(M)\le t$. 

Take any element $a_j\in\t_n(\X,\Y)$.
Since $i+j\ge i>n$, the element $a_j$ belongs to $\ann_R\Tor_{i+j}^R(M,N)$, which is contained in $\ann_R\Ext_R^{d-j}(\Tor_{i+j}^R(M,N),\omega)$.
The first inclusion in Lemma \ref{2}(1) yields
$$\textstyle
a_0\cdots a_t\in\prod_{j=0}^t\ann_R\Ext_R^{d-j}(\Tor_{i+j}^R(M,N),\omega)
\subseteq\ann_R\Ext_R^{d+i}(M,N^\dag).
$$
As we fix $i>n$, $M\in\X$ and $N\in\Y$, we get $a_0\cdots a_t\in\e^{d+n}(\X,\Y^\dag)$.
Thus $(\t_n(\X,\Y))^{t+1}\subseteq\e^{d+n}(\X,\Y^\dag)$.

The other inclusion is similarly deduced.
Pick $a_j\in\e^{d+n-t}(\X,\Y)$.
As $d+i-j\ge d+i-t>d+n-t$, we have $a_j\in\ann_R\Ext_R^{d+i-j}(M,N)\subseteq\ann_R\Ext_R^{d-j}(\Ext_R^{d+i-j}(M,N),\omega)$.
The second inclusion in Lemma \ref{2}(1) implies
$$\textstyle
a_0\cdots a_t\in\prod_{j=0}^t\ann_R\Ext_R^{d-j}(\Ext_R^{d+i-j}(M,N),\omega)\subseteq\ann_R\Tor_i^R(M,N^\dag),
$$
and hence $a_0\cdots a_t\in\t_n(\X,\Y^\dag)$.
Therefore, the inclusion $(\e^{d+n-t}(\X,\Y))^{t+1}\subseteq\t_n(\X,\Y^\dag)$ follows.

When $t=0$, the equality $\t_n(\X,\Y)=\e^{d+n}(\X,\Y^\dag)$ follows from the last assertion of Lemma \ref{2}(1).
Replacing $\Y$ with $\Y^\dag$, we see that the equality $\t_n(\X,\Y^\dag)=\e^{d+n}(\X,\Y)$ also holds\footnote{These two equalities can also be deduced from the inclusions given in the first part of Proposition \ref{6}(1). In fact, letting $t=0$ yields $\t_n(\X,\Y)\subseteq\e^{d+n}(\X,\Y^\dag)$ and $\e^{d+n}(\X,\Y)\subseteq\t_n(\X,\Y^\dag)$. Then replace $\Y$ with $\Y^\dag$.}.

(2) The assertion immediately follows from Lemma \ref{2}(2).
\end{proof}  

Here are two immediate consequences of the above proposition.

\begin{cor}\label{000}
Let $R$ be a $d$-dimensional Cohen--Macaulay local ring with a canonical module.
Let $n\ge0$ be an integer.
Then $\t_n(\cm_0(R))=\e^{d+n}(\cm_0(R))$ if $R$ is locally Gorenstein on $\speco(R)$. 
\end{cor}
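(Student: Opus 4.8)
The plan is to apply Proposition~\ref{6}(1) with $\X=\Y=\cm_0(R)$ and $t=0$, and then to verify that the canonical dual leaves $\cm_0(R)$ invariant when $R$ is locally Gorenstein on $\speco(R)$. Since $\cm_0(R)=\cm(R)\cap\mod_0(R)$, we have $\cm_0(R)\subseteq\mod_0(R)$ and $\cm_0(R)\subseteq\cm(R)$, so the hypotheses of Proposition~\ref{6}(1) are satisfied; taking $t=0$ yields
$$
\t_n(\cm_0(R))=\t_n(\cm_0(R),\cm_0(R))=\e^{d+n}(\cm_0(R),\cm_0(R)^\dag).
$$
It therefore suffices to prove $\cm_0(R)^\dag=\cm_0(R)$.

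Recall that for $M\in\cm(R)$ the module $M^\dag$ again belongs to $\cm(R)$ and the biduality map $M\to M^{\dag\dag}$ is an isomorphism, so it is enough to show that, for $M\in\cm(R)$, one has $M\in\mod_0(R)$ if and only if $M^\dag\in\mod_0(R)$. Fix $\p\in\speco(R)$. Then $\omega_\p$ is a canonical module of $R_\p$, and $R_\p$ is Gorenstein by hypothesis, so $\omega_\p\cong R_\p$; since $M$ is finitely generated this gives $(M^\dag)_\p\cong\Hom_{R_\p}(M_\p,\omega_\p)\cong\Hom_{R_\p}(M_\p,R_\p)=(M_\p)^\ast$. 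Moreover $M_\p$ is maximal Cohen--Macaulay over the Gorenstein local ring $R_\p$, hence totally reflexive and in particular reflexive, so $M_\p$ is free if and only if $(M_\p)^\ast$ is free, that is, if and only if $(M^\dag)_\p$ is free. Thus $\nf(M)\cap\speco(R)=\nf(M^\dag)\cap\speco(R)$, whence $\dim\nf(M)\le0$ if and only if $\dim\nf(M^\dag)\le0$. Combined with $M^\dag\in\cm(R)$, this gives $M\in\cm_0(R)\iff M^\dag\in\cm_0(R)$, i.e.\ $\cm_0(R)^\dag=\cm_0(R)$.

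Substituting this into the displayed equality gives $\t_n(\cm_0(R))=\e^{d+n}(\cm_0(R),\cm_0(R))=\e^{d+n}(\cm_0(R))$, which is the assertion. The only place the Gorenstein hypothesis is used is the invariance $\cm_0(R)^\dag=\cm_0(R)$: that is the step to be careful about, and it is handled by reducing the canonical dual to the ordinary $R$-dual after localizing at a prime on the punctured spectrum. No serious obstacle is expected beyond this verification.
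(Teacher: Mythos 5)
Your proposal is correct and follows the same route as the paper: the paper's proof also consists of observing that local Gorensteinness on $\speco(R)$ forces $(\cm_0(R))^\dag=\cm_0(R)$ and then applying Proposition \ref{6}(1) with $t=0$ and $\X=\Y=\cm_0(R)$. The only difference is that you spell out the verification of $(\cm_0(R))^\dag=\cm_0(R)$ (via $\omega_\p\cong R_\p$ and reflexivity of $M_\p$), which the paper states without proof; that verification is sound.
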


\begin{proof}
Since $R$ is locally Gorenstein on the punctured spectrum, $\cm_0(R)$ is closed under canonical duals, that is to say, $(\cm_0(R))^\dag=\cm_0(R)$. 
Let $t=0$ and $\X=\Y=\cm_0(R)$ in Proposition \ref{6}(1).
\end{proof}

\begin{cor}\label{12}
Let $R$ be an artinian local ring.
Let $n\ge0$ be an integer.
\begin{enumerate}[\rm(1)]
\item
Let $\X,\Y$ be subcategories of $\mod R$.
If $\Y$ is closed under Matlis duals, then $\t_n(\X,\Y)=\e^n(\X,\Y)$.
\item
There is an equality $\t_n(\mod R)=\e^n(\mod R)$.
\end{enumerate}
\end{cor}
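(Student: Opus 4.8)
The plan is to obtain both equalities as the degenerate case $d=0$ of Proposition \ref{6}(1). First I would collect the features of the artinian case that make that proposition applicable: an artinian local ring $R$ is Cohen--Macaulay of dimension $d=0$ and admits a canonical module, namely $\omega=\E_R(k)$, the injective hull of the residue field, which has finite length and hence is finitely generated; moreover $\mod R=\cm(R)=\mod_0(R)$, since the depth and Krull dimension of a nonzero module are both zero and $\spec R$ has dimension $0$. Because $\omega=\E_R(k)$ is the Matlis dualizing module, the canonical dual $(-)^\dag=\Hom_R(-,\omega)$ coincides with the Matlis dual $(-)^\vee$, so $\Y^\dag=\Y^\vee$ for every subcategory $\Y$ of $\mod R$; and Matlis duality restricts to an involution on $\mod R$, i.e. $M^\vee\in\mod R$ and $M\cong M^{\vee\vee}$ for all $M\in\mod R$ (see \cite[Section 3.2]{BH}).

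For part (1): since $\X\subseteq\mod R=\mod_0(R)$ and $\Y\subseteq\mod R=\cm(R)$, I would apply Proposition \ref{6}(1) with $d=0$ and $t=0$; its $t=0$ assertion gives $\t_n(\X,\Y)=\e^{d+n}(\X,\Y^\dag)=\e^n(\X,\Y^\vee)$. The hypothesis that $\Y$ is closed under Matlis duals means $\Y^\vee\subseteq\Y$, and combined with the involutivity of $(-)^\vee$ on $\mod R$, which forces $\Y\subseteq\Y^\vee$, this yields $\Y^\vee=\Y$. Hence $\t_n(\X,\Y)=\e^n(\X,\Y)$.

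For part (2): the subcategory $\mod R$ is itself closed under Matlis duals, again because $M^\vee$ is finitely generated whenever $M$ is, so taking $\X=\Y=\mod R$ in part (1) gives $\t_n(\mod R)=\t_n(\mod R,\mod R)=\e^n(\mod R,\mod R)=\e^n(\mod R)$.

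I do not expect a genuine obstacle here: all the substance lies in Proposition \ref{6}(1), and what remains is only bookkeeping for the artinian case --- chiefly the identification $(-)^\dag=(-)^\vee$ and the fact that Matlis duality is an involution preserving $\mod R$ --- which is entirely standard.
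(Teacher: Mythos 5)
Your proof is correct and is essentially the paper's own argument: the paper simply says "letting $d=t=0$ in Proposition \ref{6}(1) yields the assertion," and your write-up supplies exactly the bookkeeping that step implicitly uses (for artinian $R$ one has $\mod R=\cm(R)=\mod_0(R)$, $\omega=\E_R(k)$ so $(-)^\dag=(-)^\vee$, and closure under Matlis duals gives $\Y^\vee=\Y$). No further comment is needed.
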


\begin{proof}
Letting $d=t=0$ in Proposition \ref{6}(1) yields the assertion.
\end{proof}

To state our theorem below, we recall the definition of the non-Gorenstein locus of $R$.

\begin{dfn}
We denote by $\ng(R)$ the {\em non-Gorenstein locus} of $R$, that is, the set of prime ideals $\p$ of $R$ such that the local ring $R_\p$ is non-Gorenstein.
If $R$ is a Cohen--Macaulay ring with a canonical module $\omega$, it holds that $\ng(R)=\nf(\omega)=\V(\trace\omega)$ (see \cite[Lemma 2.1]{HHS}).
\end{dfn}

Now we can state and prove the theorem below.
The second assertion contains
part of Theorem \ref{14}(2).   

\begin{thm}\label{3}
Let $R$ be a $d$-dimensional Cohen--Macaulay local ring with a canonical module $\omega$.
Let $n\ge0$ and $0\le t\le d$ be integers. 
\begin{enumerate}[\rm(1)]
\item
Let $\X$ be a subcategory of $\cm_{t}(R)$. 
Let $\Y$ be a subcategory of $\cm(R)$ closed under canonical duals.
Assume either that $\Y$ is closed under syzygies  or that $\X$ is closed under cosyzygies.  
Then 
$$
(\trace\omega)^d\cdot(\t_n(\X,\Y))^{t+1}\subseteq\e^n(\X,\Y),\qquad(\e^n(\X,\Y))^{t+1}\subseteq\t_n(\X,\Y). 
$$
\item
If $\dim\ng(R)\le t$ and $\cm_0(R)\subseteq \X \subseteq \cm_t(R)$, then $\sqrt{\t_n(\X,\cm_{t}(R))}=\sqrt{\e^n(\X,\cm_{t}(R))}$.
\end{enumerate}
\end{thm}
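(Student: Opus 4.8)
The plan is to deduce (2) from (1) by passing to radicals, so the first task is to prove (1). For part (1), I would apply Proposition \ref{6} with the roles of $\X$ and $\Y$ arranged appropriately, combining parts (1) and (2) of that proposition. The second inclusion $(\e^n(\X,\Y))^{t+1}\subseteq\t_n(\X,\Y)$ is the more direct one: since $\Y$ is closed under canonical duals we have $\Y^\dag=\Y$, and Proposition \ref{6}(1) gives $(\e^{d+n-t}(\X,\Y))^{t+1}\subseteq\t_n(\X,\Y^\dag)=\t_n(\X,\Y)$; but we want the exponent $n$ on the Ext side, not $d+n-t$, so I need to first move from $\e^n$ to $\e^{d+n-t}$. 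This is where Proposition \ref{6}(2) (equivalently Lemma \ref{2}(2)) enters: under the hypothesis that either $\Y$ is closed under syzygies or $\X$ is closed under cosyzygies, the inequality $\e^{r+n}(\Y,\syz^r\X)\supseteq$ or $\e^{r+n}(\cos^r\Y,\X)$ lets one shift the cohomological degree, at the cost of a factor of $(\trace\omega)^r$. Choosing $r=d-t$ (or an appropriate value) one relates $\e^{d+n-t}$ back to $\e^n$; I must be careful to check that the subcategory appearing after applying a syzygy or cosyzygy still lies in the relevant class ($\syz^r\X\subseteq\cm_t(R)$ since $\cm_t(R)$ is closed under syzygies when $R$ is Cohen--Macaulay, and dually for cosyzygies), so that Proposition \ref{6} applies again.

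For the first inclusion $(\trace\omega)^d\cdot(\t_n(\X,\Y))^{t+1}\subseteq\e^n(\X,\Y)$, I run the analogous argument in the other direction: Proposition \ref{6}(1) with $t$ gives $(\t_n(\X,\Y))^{t+1}\subseteq\e^{d+n}(\X,\Y^\dag)=\e^{d+n}(\X,\Y)$, and then I use Proposition \ref{6}(2) to descend from cohomological degree $d+n$ down to $n$, which costs the factor $(\trace\omega)^d$ (from $r=d$ applications of the syzygy/cosyzygy shift — or $r=d-n$... but to be safe and uniform one takes $r=d$, which dominates). The bookkeeping of exactly which power of $\trace\omega$ and which exponent $t+1$ appears is the fiddly part, but it is routine once the two ingredients are in place.

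For part (2): assume $\dim\ng(R)\le t$ and $\cm_0(R)\subseteq\X\subseteq\cm_t(R)$. Take $\Y=\cm_t(R)$, which is closed under canonical duals (since $(\cm_t(R))^\dag=\cm_t(R)$, as $\dag$ preserves maximal Cohen--Macaulayness and the nonfree locus up to the relevant codimension — this needs a one-line check, or a citation) and closed under syzygies. Then part (1) applies and gives $(\e^n(\X,\Y))^{t+1}\subseteq\t_n(\X,\Y)\subseteq\sqrt{\e^n(\X,\Y)}$ (the last inclusion from $(\trace\omega)^d\cdot(\t_n(\X,\Y))^{t+1}\subseteq\e^n(\X,\Y)$ once we know $\trace\omega$ is not lost on taking radicals — here is the point where $\dim\ng(R)\le t$ is used: $\V(\trace\omega)=\ng(R)$ has dimension $\le t$, and one checks that $\sing R\subseteq\V(\t_n(\X,\Y))$ already contains $\ng(R)$ in the relevant range, or more simply that $\trace\omega$ can be absorbed into the radical because $\ng(R)\subseteq\V(\t_n(\X,\Y))$). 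Taking radicals of both inclusions then yields $\sqrt{\t_n(\X,\cm_t(R))}=\sqrt{\e^n(\X,\cm_t(R))}$.

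The main obstacle I anticipate is the degree-shifting step — getting from cohomological degree $d+n$ (or $d+n-t$) back to degree $n$ using Proposition \ref{6}(2) while keeping track of the precise power of $\trace\omega$ and ensuring the intermediate subcategories $\syz^r\X$, $\cos^r\Y$ stay inside the classes to which Proposition \ref{6} applies; and, in part (2), the argument that the factor $(\trace\omega)^d$ may be discarded upon passing to radicals, which is exactly where the hypothesis $\dim\ng(R)\le t$ must be invoked (together with the already-established inclusion $\ng(R)\subseteq\sing R\subseteq\V(\t_n(\X,\Y))$ coming from Lemma \ref{3.11} or the equality case of Proposition \ref{6}).
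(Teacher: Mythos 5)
There is a genuine gap in your plan for the second inclusion of part (1). You propose to pass from $\e^n(\X,\Y)$ to $\e^{d+n-t}(\X,\Y)$ by a degree-shifting application of Proposition \ref{6}(2), at the cost of a power of $\trace\omega$. That step is both unnecessary and unusable. It is unnecessary because the containment $\e^n(\X,\Y)\subseteq\e^{d+n-t}(\X,\Y)$ is immediate from the definitions: raising the superscript from $n$ to $d+n-t\ge n$ (here one only uses $d-t\ge0$) means intersecting the annihilators of fewer Ext modules, hence a larger ideal; this trivial observation is exactly how the paper argues. It is unusable because Proposition \ref{6}(2) gives inclusions of the shape $(\trace\omega)^r\cdot\e^{r+n}(-,\syz^r(-))\subseteq\e^n(-,-)$, i.e.\ it bounds the \emph{lower}-degree annihilator from below by trace times a higher-degree one, whereas what your argument needs at this point is an upper bound for $\e^n(\X,\Y)$ by a degree-$(d+n-t)$ annihilator. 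Carried out as written, your route cannot deliver the stated inclusion $(\e^n(\X,\Y))^{t+1}\subseteq\t_n(\X,\Y)$; at best it would leave a spurious factor of a power of $\trace\omega$, which is strictly weaker than the theorem (and would also force extra work in part (2)). Once you replace this step by the trivial containment, Proposition \ref{6}(1) together with $\Y^\dag=\Y$ finishes, exactly as in the paper. Your treatment of the first inclusion (Proposition \ref{6}(1) to reach $\e^{d+n}(\X,\Y^\dag)=\e^{d+n}(\X,\Y)$, then Proposition \ref{6}(2) with $r=d$ combined with $\syz^d\Y\subseteq\Y$ or $\cos^d\X\subseteq\X$) coincides with the paper's proof.

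In part (2) your strategy (take $\Y=\cm_t(R)$, apply (1), pass to radicals, and absorb $\trace\omega$ using $\ng(R)\subseteq\sing R\subseteq\V(\t_n(\cm_0(R)))\subseteq\V(\t_n(\X,\cm_t(R)))$ from Lemma \ref{3.11}(3)) is the paper's, but you misplace where the hypothesis $\dim\ng(R)\le t$ enters. Your proposed one-line check that $\cm_t(R)$ is closed under canonical duals, namely that $(-)^\dag$ preserves the nonfree locus, is false in general: for $M\in\cm(R)$ one only has $\nf(M^\dag)\subseteq\nf(M)\cup\ng(R)$, so it is precisely the assumption $\dim\ng(R)\le t$ that guarantees $(\cm_t(R))^\dag\subseteq\cm_t(R)$ and hence that part (1) is applicable at all. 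The absorption of $(\trace\omega)^d$ into the radical, by contrast, needs only $\ng(R)\subseteq\V(\t_n(\X,\cm_t(R)))$, which holds without the dimension hypothesis.
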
  

\begin{proof}
(1) First of all, since $\Y$ is closed under canonical duals, we have $\Y=\Y^\dag$.

The following argument deduces the first inclusion in the assertion.
\begin{align*}
(\trace\omega)^d\cdot(\t_n(\X,\Y))^{t+1}
&\overset{\rm(a)}{\subseteq} (\trace\omega)^d\cdot\e^{d+n}(\X,\Y)\\
&\overset{\rm(b)}{\subseteq}
\begin{cases}
(\trace\omega)^d\cdot\e^{d+n}(\X,\syz^d\Y)&\text{if $\Y$ is closed under syzygies},\\
(\trace\omega)^d\cdot\e^{d+n}(\cos^d\X,\Y)&\text{if $\X$ is closed under cosyzygies}
\end{cases}
\overset{\rm(c)}{\subseteq}\e^n(\X,\Y). 
\end{align*} 

Here are the reasons why (a)--(c) hold.\ 
(a): Applying Proposition \ref{6}(1), we have an inclusion $(\t_n(\X,\Y))^{t+1}\subseteq \e^{d+n}(\X,\Y)$.
(b):  If $\Y$ (resp. $\X$) is closed under syzygies (resp. cosyzygies), then $\syz^d\Y$ (resp. $\cos^d\X$) is contained in $\Y$ (resp. $\X$), and hence $\e^{d+n}(\X,\Y)$ is contained in $\e^{d+n}(\X,\syz^d\Y)$ (resp. $\e^{d+n}(\cos^d\X,\Y)$).
(c): Use Proposition \ref{6}(2). 

Next we show the second inclusion in the assertion.
Since $d-t\ge0$, the ideal $\e^n(\X,\Y)$ is contained in the ideal $\e^{d+n-t}(\X,\Y)$, whose $(t+1)$st power is contained in $\t_n(\X,\Y)$ by Proposition \ref{6}(1).

(2)  When $d=0$, we have $t=0$ and $\X=\cm_t(R)=\mod R$.
By Corollary \ref{12}(2) the assertion holds.
Hence we assume $d>0$, so that we get equalities $\V((\trace \omega )^d)=\V(\trace \omega )=\ng (R)$.

We claim that for each maximal Cohen--Macaulay $R$-module $M$ there is an inclusion
$$
\nf(M^\dag)\subseteq\nf(M)\cup\ng(R).
$$
Indeed, if $\p$ is a prime ideal of $R$ which does not belong to $\nf(M)\cup\ng(R)$, then both $M_\p$ and $\omega_\p$ are $R_\p$-free, and so is $(M^\dag)_\p$.

Combining this claim with the assumption $\dim\ng(R)\le t$, we observe that $\cm_t(R)$ is closed under canonical duals.
The subcategory $\cm_t(R)$ is always closed under syzygies.
Applying (1) to $\X$ and $\Y:=\cm_t(R)$, we get the first line below, which yields the second.
\begin{align*}
&(\trace\omega)^d\cdot(\t_n(\X,\cm_t(R)))^{t+1}\subseteq\e^n(\X,\cm_t(R)),\quad(\e^n(\X,\cm_t(R)))^{t+1}\subseteq\t_n(\X,\cm_t(R)).\\
&\V(\e^n(\X,\cm_t(R)))\subseteq\ng(R)\cup\V(\t_n(\X,\cm_t(R))),\quad\V(\t_n(\X,\cm_t(R)))\subseteq\V(\e^n(\X,\cm_t(R))).
\end{align*}
There are inclusions $\ng(R)\subseteq \sing R\subseteq\V(\t_n(\cm_0(R)))\subseteq\V(\t_n(\X,\cm_t(R)))$, where the second one follows from Lemma \ref{3.11}(3).
Hence $\V(\e^n(\X,\cm_t(R)))$ is contained in $\V(\t_n(\X,\cm_t(R)))$.
Now we obtain the equality $\V(\e^n(\X,\cm_t(R)))=\V(\t_n(\X,\cm_t(R)))$, which completes the proof.
\end{proof}

A natural question arises.

\begin{ques}
Let $R$ be a Cohen--Macaulay local ring.
Does the following equality always hold?
\begin{equation}\label{11}
\t_0(\cm_0(R))=\e^0(\cm_0(R)).
\end{equation}
\end{ques}

Theorem \ref{3}(2) and Corollaries \ref{13}, \ref{12} guarantee that \eqref{11} holds if $R$ is either Gorenstein or artinian, and holds up to radical when $R$ is locally Gorenstein on the punctured spectrum.
Corollary \ref{000} says that, when $R$ is locally Gorenstein on the punctured spectrum, \eqref{11} is equivalent to the equality $\e^0(\cm_0(R))=\e^d(\cm_0(R))$, where $d=\dim R$.
We do not know any counterexample to \eqref{11}.
Below, we give a simple class of examples which supports \eqref{11}. 

\begin{ex}
Let $k$ be a field, and consider the numerical semigroup ring $R=k[\![t^n,t^{n+1},\dots,t^{2n-1}]\!]$ with $n\ge1$.
Then $R$ is a $1$-dimensional local domain.
In particular, $R$ is Cohen--Macaulay and satisfies $\cm_0(R)=\cm(R)$.
Also, $R$ is non-Gorenstein if $n\ge3$, and has infinite Cohen--Macaulay representation type if $n\ge4$ by \cite[Theorem 4.10]{LW}.
The conductor of $R$ clearly coincides with the maximal ideal $\m$ of $R$, which is contained in $\e^0(\cm(R))$ by \cite[Proposition 3.1]{W}.
Note that $\t_0(\cm(R))\ne R$ unless $R$ is regular.
By Proposition \ref{7}, we get $\e^0(\cm(R))=\t_0(\cm(R))=\m$ for all $n\ge2$.
\end{ex}

We denote by $\sing R$ the {\em singular locus} of $R$, that is, the set of prime ideals $\p$ of $R$ such that $R_\p$ is singular.
Note that $R$ has an isolated singularity if and only if $\dim\sing R\le0$.

To give our final result in this paper, we need to verify that \cite[Theorem 5.11(1)]{radius} holds for an arbitrary excellent Cohen--Macaulay local ring containing a field and admitting a canonical module, and that the implication (b) $\Rightarrow$ (a) in \cite[Theorem 1.1(1)]{dim} holds even if we replace $\e^0(\cm_0(R))$ with $\e^n(\cm_0(R))$.  

\begin{prop}\label{20}
\begin{enumerate}[\rm(1)]
\item
Let $R$ be a $d$-dimensional excellent equicharacteristic local ring.
\begin{enumerate}[\rm(a)]
\item
The subcategory $\syz^{2d}(\mod R)$ of $\mod R$ has finite size and radius.
\item
Suppose that $R$ is Cohen--Macaulay and admits a canonical module.
Then the subcategory $\cm(R)$ of $\mod R$ has finite rank, size, dimension and radius.
\end{enumerate}
\item
Let $(R,\m,k)$ be a $d$-dimensional local Cohen--Macaulay ring.
Let $n\ge0$ be an integer.
Suppose that the ideal $\e^n(\cm_0(R))$ of $R$ contains some power of $\m$.
Then $R$ has an isolated singularity, and $\cm_0(R)=\cm(R)$ has finite rank, size, dimension and radius. 
\end{enumerate} 
\end{prop}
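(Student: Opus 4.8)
Both parts are verifications that existing arguments survive a weakening of hypotheses, so I expect the labour to lie in the bookkeeping, the one genuinely delicate point being the hypothesis relaxation inside part~(1).

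For part (1), the plan is to reproduce \cite[Theorem 5.11(1)]{radius} --- which yields the stated finiteness of $\size$ and $\radius$ for $\syz^{2d}(\mod R)$, and of $\rank,\size,\dim,\radius$ for $\cm(R)$, under a hypothesis slightly stronger than ``excellent equicharacteristic'' (an extra condition on the residue field) --- and to push it to every excellent equicharacteristic $R$. For (a) I would pass to $\widehat R$ and, in positive characteristic, to its Hochster--Huneke $\Gamma$-construction $\widehat R^{\Gamma}$, which is a faithfully flat local extension that is excellent, of dimension $d$, with maximal ideal $\m\widehat R^{\Gamma}$ and perfect residue field; the known case applies over $\widehat R^{\Gamma}$, and it remains to descend a finite generating bound for $\syz^{2d}(\mod \widehat R^{\Gamma})$ back to $\syz^{2d}(\mod R)$ --- which works because the generator produced by the proof of \cite[Theorem 5.11(1)]{radius} can be taken to come from $R$ (it is built from a cohomology annihilator, and membership in $|G|_{r}$ is detected by faithfully flat local base change). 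Equivalently, and probably more cleanly, I would reread that proof and simply replace the single use of the residue field --- the production of a cohomology annihilator of $R$ of the expected height, equivalently the statement that the cohomology-annihilator ideal cuts out $\sing R$ --- by its counterpart for arbitrary excellent equicharacteristic local rings. This relaxation (or the descent, on the first route) is where I expect the real difficulty. Part (b) is then the Cohen--Macaulay refinement of the same argument, with $\omega$-coresolutions (cosyzygies $\cos^{i}$) replacing free resolutions wherever needed, legitimate since $\syz^{d}M$ and $\cos^{d}M$ are maximal Cohen--Macaulay for every relevant $M$; this produces a maximal Cohen--Macaulay generator $G$ and an $r$ with $\cm(R)\subseteq|G\oplus\omega\oplus R|_{r}$, so $\rank\cm(R)<\infty$, whence $\size\cm(R),\radius\cm(R)\le\rank\cm(R)<\infty$ and $\dim\cm(R)\le\rank\cm(R)<\infty$ ($\cm(R)$ being resolving).

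For part (2), the two qualitative claims are immediate. Write $\m^{s}\subseteq\e^{n}(\cm_{0}(R))$ for some $s$. By Lemma \ref{3.11}(3), $\sing R\subseteq\V(\e^{n}(\cm_{0}(R)))\subseteq\V(\m^{s})=\{\m\}$, so $R$ has an isolated singularity; hence $R_{\p}$ is regular for every prime $\p\ne\m$, every maximal Cohen--Macaulay module $M$ is free at such $\p$, so $\nf(M)\subseteq\{\m\}$, i.e.\ $M\in\mod_{0}(R)$, giving $\cm_{0}(R)=\cm(R)$. The substantive claim --- that $\cm_{0}(R)$ has finite rank, size, dimension and radius --- is the $c=d$ case of the implication (b)$\Rightarrow$(a) of \cite[Theorem 1.1(1)]{dim}, and I would prove it by transcribing that argument with $\e^{n}$ and $\Ext^{>n}$ in place of $\e^{0}$ and $\Ext^{>0}$. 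The substitution costs nothing: since $R$ is Cohen--Macaulay, $\cm_{0}(R)$ is closed under syzygies, so Proposition \ref{7} gives $\e^{n}(\cm_{0}(R))=\bigcap_{M\in\cm_{0}(R)}\ann_{R}\lend_{R}(\syz^{n}M)$ with each $\syz^{n}M\in\cm_{0}(R)$; together with $\m^{s}\subseteq\e^{n}(\cm_{0}(R))$ and Lemma \ref{1} this says $\m^{s}\cdot\lend_{R}(K)=0$, i.e.\ $\m^{s}\subseteq\e^{0}(K,\mod R)$, for every $K\in\syz^{n}(\cm_{0}(R))$ --- exactly the shape of the hypothesis consumed by the bounded-generation (``ghost'') argument of \cite{dim}, which builds every maximal Cohen--Macaulay module from $R$, $R/\m^{s}$, and finitely many syzygies of $R/\m^{s}$ in a bounded number of extension steps, producing a finite-rank generator of $\cm_{0}(R)=\cm(R)$; finiteness of the remaining three invariants follows as in part~(1)(b). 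The only care needed is to confirm that the bookkeeping of \cite[Theorem 1.1(1)]{dim} never invokes $\Ext^{i}$ for $0<i\le n$, which it does not once the category is closed under syzygies; so beyond a faithful transcription I anticipate no obstacle in part~(2).
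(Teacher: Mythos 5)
In part (1) your ``second route'' is indeed the paper's strategy (rerun the induction in the proof of \cite[Theorem 5.7]{radius}), but you leave open exactly the ingredient that makes it work. The ``counterpart for arbitrary excellent equicharacteristic local rings'' is \cite[Theorem 5.3]{ua}: an ideal $J'$ with $\V(J')=\sing R$ annihilating $\Ext_R^{2d+1}(M,N)$ for all $M,N\in\mod R$. Crucially this annihilator lives in cohomological degree $2d+1$, not $d+1$, and that is precisely why the statement concerns $\syz^{2d}(\mod R)$ and why the paper replaces $d$, $d-1$, $\syz_RM$ by $2d$, $2(d-1)$, $\syz_R^2M$ throughout the induction: one needs $x\in J'$ to kill $\Ext_R^1(N,\syz N)=\Ext_R^{2d+1}(M,\syz^{2d+1}M)$ for $N=\syz^{2d}M$, so that $N$ is a summand of $\syz(N/xN)$ and $N/xN\in\syz^{2(d-1)}(\mod R/xR)$. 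Your proposal neither names this input nor does this degree bookkeeping, and your first route (descent from $\widehat R^\Gamma$) is not justified: a bound $\syz^{2d}(\mod \widehat R^\Gamma)\subseteq|H|_r$ with $H\in\mod\widehat R^\Gamma$ does not formally descend to a bound over $R$ with a generator in $\mod R$ --- the generator produced by the induction is built from modules over $\widehat R^\Gamma$ and its quotients, not from extended $R$-modules, and ``membership in $|G|_r$ is detected by faithfully flat base change'' points in the wrong direction for what you need.

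In part (2) the two qualitative claims are argued exactly as in the paper, but the quantitative step has a genuine gap. From $\m^s\subseteq\e^n(\cm_0(R))$ you only get $\m^s\cdot\lend_R(K)=0$ for $K\in\syz^n(\cm(R))$, so the transcription of the argument of \cite{dim} yields a bound of the shape $\syz^n(\cm(R))\subseteq|\syz^d k|_r$; it builds the $n$-th syzygies, not ``every maximal Cohen--Macaulay module'', because the annihilation of $\Ext_R^1(M,\syz M)$ that the system-of-parameters argument consumes is simply not available for $M$ itself when $n>0$. One must still recover $\cm(R)$ from $\syz^n(\cm(R))$; the paper does this by an argument analogous to \cite[Corollary 5.9]{radius}, splicing $\omega$-cosyzygies to obtain $\cm(R)\subseteq|\syz^d k\oplus W|_{r(n+1)}$, and this step requires a canonical module. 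Since the proposition does not assume one exists, the paper then runs the whole argument over $\widehat R$, transferring the hypothesis via \cite[Corollary 3.3]{stcm} (every module in $\cm_0(\widehat R)$ is a summand of a completed one) and descending the isolated singularity and the finite rank back to $R$ as at the end of \cite[Theorem 5.11(2)]{radius}. Your proposal addresses neither the passage from $\syz^n(\cm(R))$ to $\cm(R)$ nor the case without a canonical module, so the claim that ``the substitution costs nothing'' is not correct as stated.
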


\begin{proof}
(1a) We modify the proof of \cite[Theorem 5.7]{radius}.
Replace ``\,$d$\,'', ``\,$d-1$\,'' and ``\,$\syz_RM$\,'' with ``\,$2d$\,'', ``\,$2(d-1)$\,'' and ``\,$\syz_R^2M$\,'' respectively.
Then it proves the assertion.
Indeed, the assumption of \cite[Theorem 5.7]{radius} that $R$ is complete and has perfect coefficient field is used only to apply \cite[Corollary 5.15]{W} to find an ideal $J$ of $R$ which satisfies $\sing R=\V(J)$ and annihilates $\Ext_R^{d+1}(M,N)$ for all $R$-modules $M,N$ (in the case where $R$ is a singular domain with $d>0$).
By virtue of \cite[Theorem 5.3]{ua}, there is an ideal $J'$ of $R$ which satisfies $\sing R=\V(J')$ and annihilates $\Ext_R^{2d+1}(M,N)$ for all $R$-modules $M,N$. 

Here is the flow of the proof.
We use induction on $d$, and the case $d=0$ follows by the original argument.
Let $d>0$.
As in the original argument, we may assume that $R$ is a singular domain.
Take $J'$ as above, and find an element $0\ne x\in J'$.
Put $N=\syz_R^{2d}M$.
Then $x$ is $N$-regular.
As in the original argument, $N$ is a direct summand of $\syz_R(N/xN)$, and $N/xN\cong\syz_{R/xR}^{2(d-1)}(\syz_R^2M/x\syz_R^2M)\in\syz_{R/xR}^{2(d-1)}(\mod R/xR)$.
 
(1b) We modify the proof of \cite[Corollary 5.9]{radius}.
Replace ``\,$d$\,'' with ``\,$2d$\,'' in it, and apply (1a) instead of \cite[Theorem 5.7]{radius}.
Then we observe that \cite[Corollary 5.9]{radius} remains valid for any excellent equicharacteristic local ring.
Combining this with \cite[Proposition 5.10]{radius} deduces the assertion.

(2)
(i) We first deal with the case where $R$ admits a canonical module.
Using Lemma \ref{3.11}(3), we have $\sing R\subseteq\V(\e^n(\cm_0(R)))\subseteq\{\m\}$.
This particularly says that $R$ has an isolated singularity, and hence $\cm_0(R)=\cm(R)$.
Similarly as in the proof of \cite[Proposition 6.1(2a)]{dim}, we observe that there exists an integer $r>0$ such that $\syz^n(\cm(R))\subseteq|\syz^dk|_r$.
An analogous argument as in the proof of \cite[Corollary 5.9]{radius} yields that $\cm(R)\subseteq|\syz^dk\oplus W|_{r(n+1)}$ for some $W\in\cm(R)$.
We obtain $\cm(R)=|\syz^dk\oplus W|_{r(n+1)}$, which shows that $\cm(R)$ has rank less than $r(n+1)$.

(ii) Now, let us handle the general case when $R$ may not possess a canonical module.
By assumption, $\e^n(\cm_0(R))$ contains $\m^h$ for some $h>0$.
Fix $i>n$ and $X,Y \in \cm_0(\widehat R)$, where $\widehat R$ is the completion of $R$.
By \cite[Corollary 3.3]{stcm}, there exist $M,N \in \cm_0(R)$ such that $X,Y$ are direct summands of $\widehat M, \widehat N$ respectively.
Hence $\Ext^i_{\widehat R}(X,Y)$ is a direct summand of $\Ext^i_{\widehat R} (\widehat M, \widehat N)$, which is annihilated by $(\m\widehat R)^h$.
Thus $\e^n(\cm_0(\widehat R))$ contains $(\m\widehat R)^h$.
As $\widehat R$ admits a canonical module, it follows from (i) that $\widehat R$ has an isolated singularity, and $\cm_0(\widehat R)=\cm(\widehat R)$ has finite rank, size, dimension and radius.
It is an elementary fact that having an isolated singularity descends from $\widehat R$ to $R$, and we have $\cm_0(R)=\cm(R)$.
An argument similar to the one done at the end of \cite[Theorem 5.11(2)]{radius} shows that $\cm(R)$ has finite rank.
Hence $\cm(R)$ has finite dimension, size and radius as well.
\end{proof}  

\begin{rem}
Proposition \ref{20}(1a) refines the latter statement of \cite[Theorem 5.3]{ua}, which asserts that the subcategory $\syz^{3d}(\mod R)$ of $\mod R$ has finite size.
\end{rem}

As an application of Theorem \ref{3}, one can refine the main results of \cite{dim}.
More precisely, it is asserted in \cite[Theorem 1.1 and Corollary 7.2]{dim} that among the four conditions (a)--(d) given in the corollary below,
\begin{enumerate}[\quad(A)]
\item
the implication ${\rm(a)}\Rightarrow{\rm(d)}$ holds,
\item
the implications ${\rm(a)} \Leftrightarrow {\rm(b)} \Rightarrow {\rm(c)} \Rightarrow {\rm(d)}$ hold for $t=0$, and 
\item
the equivalences ${\rm(a)} \Leftrightarrow {\rm(b)} \Leftrightarrow {\rm(c)} \Leftrightarrow {\rm(d)}$ hold for $t=0$ provided that $R$ is complete and has perfect coefficient field.
\end{enumerate}
When $R$ admits a canonical module, by using Theorem \ref{3}(2) we can improve the above statements (A), (B), (C) as follows. 

\begin{cor}\label{8}
Let $R$ be a $d$-dimensional Cohen--Macaulay local ring. 
Let $n\ge 0$ and $0\le t\le d$ be integers. 
Consider the following conditions.
\begin{align*}
{\rm(a)}\ \dim\cm_t(R)<\infty.\quad
&{\rm(b)}\ \dim\V(\e^n(\cm_t(R)))\le t.\\ 
&{\rm(c)}\ \dim\V(\t_n(\cm_t(R)))\le t. \quad
{\rm(d)}\ \dim\sing R\le t.
\end{align*}
\begin{enumerate}[\rm(1)]
\item
The implications ${\rm(a)} \Rightarrow {\rm(b)} \Rightarrow {\rm(c)} \Rightarrow {\rm(d)}$ hold.  
\item
The equivalence ${\rm(b)}\Leftrightarrow{\rm(c)}$ holds when $R$ admits a canonical module.
\item
The implications ${\rm(a)}\Leftrightarrow{\rm(b)}\Leftrightarrow{\rm(c)}\Rightarrow{\rm(d)}$ hold when $t=0$.   
\item
The equivalences ${\rm(a)}\Leftrightarrow{\rm(b)}\Leftrightarrow{\rm(c)}\Leftrightarrow{\rm(d)}$ hold when $R$ is excellent, equicharacteristic and admits a canonical module.
\end{enumerate} 

\end{cor}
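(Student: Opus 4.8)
The plan is to build the corollary out of the tools already in hand --- Propositions \ref{7} and \ref{20}, Lemma \ref{3.11}(3), Theorem \ref{3}(2) and Lemma \ref{1} --- together with two elementary facts used throughout: $\cm_0(R)\subseteq\cm_t(R)$, and $\nf(M)\subseteq\sing R$ for every maximal Cohen--Macaulay $R$-module $M$. I would prove (1) first, then (2), and finally (3) and (4), so that each later part may cite the earlier ones.

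For (1) I would treat the three implications separately. The implication ${\rm(b)}\Rightarrow{\rm(c)}$ is immediate from Proposition \ref{7} with $\X=\Y=\cm_t(R)$, which gives $\e^n(\cm_t(R))\subseteq\t_n(\cm_t(R))$ and hence $\V(\t_n(\cm_t(R)))\subseteq\V(\e^n(\cm_t(R)))$. The implication ${\rm(c)}\Rightarrow{\rm(d)}$ follows from $\t_n(\cm_t(R))\subseteq\t_n(\cm_0(R))$ together with $\sing R\subseteq\V(\t_n(\cm_0(R)))$ of Lemma \ref{3.11}(3). The implication ${\rm(a)}\Rightarrow{\rm(b)}$ is the substantive one: if $\dim\cm_t(R)<\infty$, then $\cm_t(R)=[G]_{p+1}$ for some $G$, and after replacing $G$ by $\syz^dG\oplus R$ and $p$ by a larger integer I may assume $G\in\cm_t(R)$, so $\dim\nf(G)\le t$. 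Using the standard filtration description of $[G]_{p+1}$ from \cite{radius} --- each object is a direct summand of a module carrying a length-$p$ filtration with subquotients in $\add(\{R\}\cup\{\syz^lG\mid l\ge0\})$ --- and chasing the long exact sequences of $\Ext$ with the shift $\Ext_R^i(\syz^lG,N)\cong\Ext_R^{i+l}(G,N)$, I would bound $\e^n(\cm_t(R))$ from below by a power of $\e^n(G,\mod R)$, which by Lemma \ref{1} and a dimension shift equals $\ann_R\lend_R(\syz^nG)$. As $\lend_R(\syz^nG)$ is finitely generated with support $\nf(\syz^nG)\subseteq\nf(G)$, this yields $\V(\e^n(\cm_t(R)))\subseteq\nf(G)$ and hence ${\rm(b)}$.

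For (2), ${\rm(b)}\Rightarrow{\rm(c)}$ is part of (1); for ${\rm(c)}\Rightarrow{\rm(b)}$ I would invoke (1) to pass from ${\rm(c)}$ to ${\rm(d)}$, so that $\dim\ng(R)\le\dim\sing R\le t$, and then apply Theorem \ref{3}(2) with $\X=\cm_t(R)$ to get $\sqrt{\t_n(\cm_t(R))}=\sqrt{\e^n(\cm_t(R))}$, which makes ${\rm(b)}$ and ${\rm(c)}$ coincide. For (4), after (1) and (2) the only missing arrow is ${\rm(d)}\Rightarrow{\rm(a)}$: when $\dim\sing R\le t$ every maximal Cohen--Macaulay module $M$ satisfies $\dim\nf(M)\le\dim\sing R\le t$, so $\cm_t(R)=\cm(R)$, and Proposition \ref{20}(1b) --- applicable since $R$ is excellent, equicharacteristic, Cohen--Macaulay and has a canonical module --- shows $\cm(R)$ has finite dimension.

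Part (3) is where I expect the real difficulty, since no canonical module is available. Here ${\rm(b)}\Rightarrow{\rm(a)}$ is Proposition \ref{20}(2), because condition ${\rm(b)}$ for $t=0$ says precisely that $\e^n(\cm_0(R))$ contains a power of $\m$; so it remains to prove ${\rm(c)}\Rightarrow{\rm(b)}$, which I would do by descent to the completion in the spirit of the proof of Proposition \ref{20}(2). From $\m^h\subseteq\t_n(\cm_0(R))$ one obtains $(\m\widehat R)^h\subseteq\t_n(\cm_0(\widehat R))$, using that each object of $\cm_0(\widehat R)$ is a direct summand of the completion of an object of $\cm_0(R)$ (\cite{stcm}) and that completion is flat; applying (2) to $\widehat R$, which does carry a canonical module, gives $(\m\widehat R)^{h'}\subseteq\e^n(\cm_0(\widehat R))$; and faithful flatness of $R\to\widehat R$ descends this to $\m^{h'}\subseteq\e^n(\cm_0(R))$, i.e.\ ${\rm(b)}$. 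The two places calling for genuine care are the d\'evissage estimate in ${\rm(a)}\Rightarrow{\rm(b)}$ and this completion descent --- in particular verifying that $\widehat M\in\cm_0(\widehat R)$ for $M\in\cm_0(R)$ and that the module-level reductions are compatible with passing to annihilators --- while the remaining implications are formal consequences of the cited results.
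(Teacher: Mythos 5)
Your proposal is correct and follows essentially the same route as the paper's proof: Proposition \ref{7} and Lemma \ref{3.11}(3) together with the $[G]_r$-d\'evissage for (1) (which the paper simply outsources to \cite[Lemma 5.3(1) and Proposition 5.1(1)]{dim} after writing $\cm_t(R)=[G]_r$), Theorem \ref{3}(2) for (2), the completion ascent/descent combined with Proposition \ref{20}(2) for (3), and Proposition \ref{20}(1b) for (4). One cosmetic remark: in (a)$\Rightarrow$(b) no replacement of $G$ by $\syz^dG\oplus R$ is needed (and the equality $\cm_t(R)=[G']_{p'}$ would not obviously survive it), since $G\in[G]_{p+1}=\cm_t(R)$ automatically, giving $\dim\nf(G)\le t$ at once.
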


\begin{proof} 
(1) It follows from Proposition \ref{7} that (b) implies (c).
The assertion that (c) implies (d) follows from the inclusions $\sing R\subseteq\V(\t_n(\cm_0(R)))\subseteq\V(\t_n(\cm_t(R)))$ by Lemma \ref{3.11}(3).
Let us prove that (a) implies (b).
Note that the set $\V(\e^n(\cm_t(R)))$ is contained in $\V(\e^0(\cm_t(R)))$.
It suffices to show that $\V(\e^0(\cm_t(R)))$ has dimension at most $t$.
We make a similar argument as in the proof of \cite[Proposition 6.1(1a)]{dim}.  
Write $\cm_t(R)=[G]_r$ with $G\in\cm_t(R)$ and $r>0$.  
We have
$$
\V(\e^0(\cm_t(R)))=\V(\e^0(G,\cm_t(R)))\subseteq\V(\e^0(G,\mod R))=\nf(G)
$$
by \cite[Lemma 5.3(1) and Proposition 5.1(1)]{dim}.
It follows that the dimension of $\V(\e^0(\cm_t(R)))$ is at most the dimension of $\nf(G)$, which is at most $t$ since $G\in\cm_t(R)$.

(2) Suppose that (b) (resp. (c)) holds.
Then it follows from (1) that (d) holds, i.e., $\dim \sing R\le t$.
Since $\ng(R)\subseteq\sing R$, we get $\dim \ng(R)\le t$.
Hence (c) (resp. (b)) holds by Theorem \ref{3}(2).
  
(3) Let $t=0$.
Proposition \ref{20}(2) shows that (b) implies (a).
Due to (1), we only need to prove that (c) implies (b).
So assume that (c) holds.
Then $\t_n(\cm_0(R))$ contains some power of $\m$.
A similar argument as in (ii) in the proof of Proposition \ref{20}(2) done for $\Tor$ instead of $\Ext$ shows that $\t_n(\cm_0(\widehat R))$ contains a power of the maximal ideal $\m\widehat R$ of the completion $\widehat R$ of $R$.
Lemma \ref{3.11}(3) implies that $\widehat R$ has an isolated singularity, and so does $R$.
Hence $\cm_0(\widehat R)=\cm(\widehat R)$ and $\cm_0(R)=\cm(R)$.
Since $\widehat R$ admits a canonical module, it follows from (2) that $\e^n(\cm(\widehat R))$ contains $(\m\widehat R)^h$ for some $h>0$.
Then it is easy to observe that $\e^n(\cm(R))$ contains $\m^h$, and (b) follows.

(4) Suppose that $R$ is excellent and equicharacteristic. 
Then Proposition \ref{20}(1b) shows $\dim\cm(R)<\infty$. 
If (d) holds, then $\cm_t(R)=\cm(R)$ and (a) follows.
Combining this with (1) completes the proof.
\end{proof} 

\begin{rem}\label{10}
\begin{enumerate}[(1)]
\item
In view of (3) of Corollary \ref{8}, it is natural to ask whether (b) implies (a) for $t>0$.
It seems to be quite nontrivial even in the case $t=d>0$.
Indeed, in this case, (b) automatically holds and $\cm_t(R)=\cm(R)$.
We do not know in general whether $\cm(R)$ has finite dimension when $R$ is not equicharacteristic, even if we assume that $R$ is complete and has an isolated singularity. 
\item
Making an analogous argument as in its proof, one actually obtains a more general statement than Corollary \ref{8}(1):
\begin{quote}
Let $\X,\Y$ be subcategories of $\mod R$.
Suppose that $\X$ has finite dimension and is contained in $\mod_t(R)$.
Then $\V(\t_0(\X,\Y))$ and $\V(\e^0(\X,\Y))$ have dimension at most $t$.
\end{quote}
This is a generalization of \cite[Proposition 6.1(1a)]{dim}; letting $t=0$ recovers it. 
\item
Let $(R,\m)$ be a local ring not having an isolated singularity, and let $n=0$ and $\X=\Y=\cm_0(R)$.
Then no nonmaximal prime ideal of $R$ belongs to the supports of the modules $\ttt_n(\X,\Y)$ and $\eee^n(\X,\Y)$, while neither $\t_n(\X,\Y)$ nor $\e^n(\X,\Y)$ is $\m$-primary by Corollary \ref{8}(1).
Hence we get strict inclusions
$$
\supp_R(\ttt_n(\X,\Y))\subsetneq\V(\t_n(\X,\Y)),\qquad
\supp_R(\eee^n(\X,\Y))\subsetneq\V(\e^n(\X,\Y)).
$$
\end{enumerate}
\end{rem} 

\begin{ac}
The authors sincerely thank the anonymous referee for carefully reading the paper and giving numerous valuable suggestions.
\end{ac}

\end{document}